\numberwithin{equation}{section}
\newcommand{\teq}{\arabic{section}.\arabic{equation}}
\newcommand{\teql}{\Alph{section}.\arabic{equation}}
\newcommand{\sqr}[2]{{\vcenter{\vbox{\hrule height.#2pt\hbox{\vrule width.#2pt
height#1pt \kern#1pt\vrule width.#2pt}\hrule height.#2pt}}}}
\newcounter{eqcount}
\newenvironment{edesc}{\refstepcounter{equation}\begin{enumerate}}%
{\end{enumerate}}
\newenvironment{triv}{\refstepcounter{equation}\begin{list}%
{{\hbox{\rm(\teq)\ }}} \item }{\end{list}}
\newcommand{\ring}[1]{{\mathbb #1}}
\newcommand\bZ{{\ring{Z}}}
\newcommand\bC{{\ring{C}}} \newcommand\bR{{\ring{R}}}
\newcommand\bF{{\ring{F}}} \newcommand\bQ{{\ring{Q}}}
\newcommand{\csp}[1]{{\mathbb #1}}
\newcommand{\tsp}[1]{{\mathcal #1}}
\newcommand{\prP}{\csp{P}}
\newcommand{\sO}{{\tsp{O}}} 
\newcommand{\sQ}{\tsp{Q}}
\newcommand{\sP}{{\tsp {P}}} 
 \newcommand{\sH}{{\tsp {H}}}
 \newcommand{\sD}{{\tsp {D}}}
\newcommand{\sG}{{\tsp {G}}}
\newcommand{\eql}[2]{{\rm (\ref{#1}\ref{#2})}} 
\newcommand{\vect}[1]{{\pmb #1}} 
 \newcommand{\bg}{\vect{g}}
 \newcommand{\be}{{\vect{e}}}
\newcommand{\bp}{{\vect{p}}} 
\newcommand{\bv}{{\vect{v}}} 
 \newcommand{\bz}{{\vect{z}}}
\newcommand{\bh}{{\vect{h}}}  
\newcommand{\row}[2]{{#1_1,\ldots,#1_{#2}}}
\newcommand{\smatrix}[4]{{\big(\begin{array}{cc}
\!\lower2pt\hbox{$\scriptstyle#1$} &\lower2pt\hbox{$\scriptstyle#2$}\!
\\\! \raise2pt\hbox{$\scriptstyle#3$} &\raise2pt\hbox{$\scriptstyle#4$}
\!\end{array}\big)}}
\newcommand{\texto}[1]{{\textr{#1}}}
\newcommand{\GL}{\texto{GL}} 
 \newcommand{\ind}{\texto{ind}}
\newcommand{\PSL}{\texto{PSL}} \newcommand{\PGL}{\texto{PGL}}
 \renewcommand{\ni}{\texto{Ni}}
\newcommand{\textr}[1]{{\text{\rm #1}}}
 \newcommand{\ord}{\textr{ord}}
\newcommand{\abs}{\textr{abs}}  
 \newcommand{\inn}{\textr{in}}
\newcommand{\norm}{{\triangleleft\,}}
\newcommand{\rd}{\texto{rd}}
\newcommand{\textb}[1]{{\text{\bf #1}}}
\newcommand{\bfC}{{\textb{C}}}
\newcommand{\longmapright}[2]{\smash{\mathop{\hbox to
#2pt{\rightarrowfill}}\limits^{#1}}}
\newcommand{\longmapleft}[2]{\smash{\mathop{\hbox to
#2pt{\leftarrowfill}}\limits^{#1}}}
\newcommand{\mapright}[1]{\smash{\mathop{\longrightarrow}\limits^{#1}}}
\newcommand{\np}{{+}}
\newcommand{\lrang}[1]{{\langle #1\rangle}}
\newcommand{\eqdef}{\stackrel{\text{\rm def}}{=}}
\newfont{\sevenrm}{cmr7}
\newfont{\bsevenrm}{cmbx7}
\newfont{\mathseven}{cmsy7}
\newfont{\bigmath}{cmsy10 scaled 1200}
\newfont{\fiverm}{cmr5}
\newfont{\bfiverm}{cmbx5}
\newfont{\hel}{cmbx10 scaled 1200}
\newfont{\eu}{eufb10}
\newfont{\sseu}{eufm5}
\newfont{\seu}{eufm7}
\newfont{\Cal}{cmmib10}
\newfont{\sCal}{cmmib7}
\newfont{\zch}{eusb10}
\theoremstyle{plain}
\newtheorem{thm}{Theorem}[section] 
\newtheorem{prop}[thm]{Proposition}
\theoremstyle{definition}
\newtheorem{defn}[thm]{Definition}
\newtheorem{exmp}[thm]{Example}
\newtheorem{prob}[thm]{Problem}
\theoremstyle{remark}
\newtheorem{rem}[thm]{Remark}
\newcommand{\xs}{\times^s\!}
\newcommand{\wsp}{{$\,$---$\,$}} 
\newcommand{\C}{{\text{\rm C}}}
\newcommand{\sh}{{\text{\bf sh}}}
\renewcommand{\phi}{{\varphi}}
 \newcommand{\Cu}{\text{Cu}}
\newcommand{\Prob}[1]{\text{Problem$_{#1}^{\!\scriptscriptstyle{g=0\!}}$}}
\newcommand{\D}{{\text{DP}}}
\begin{document} 
\font\eightrm=cmr8  \font\eightit=cmsl8 \let\it=\sl  

\title[genus 0 problems]{Relating two genus 0 problems of John
Thompson}

\author[M.~Fried]{Michael D.~Fried}  
\date{\today} 
\newcommand\rk{{\text{\rm rk}}} 
\newcommand{\TG}[2]{{{}_{#1}^{#2}\tilde{G}}} 
\newcommand{\vsmatrix}[4]{{\left(\smallmatrix #1 & #2 \\ #3 & #4  
\\ \endsmallmatrix \right)}} 
\newcommand{\tbg}[1]{{{}^{#1}\tilde\bg}} 
\newcommand{\Inn}{{\text{\rm Inn}}} 
\newcommand{\af}{{\text{\rm af}}} 
\newcommand{\f}{{\text{\rm f}}} 
\newcommand{\Spin}{{\text{Spin}}}
 
\address{Emeritus, University of California at Irvine} 
\email{mfried\@math.uci.edu}  

\begin{abstract} Excluding a precise list of groups like alternating, symmetric, cyclic and
dihedral, from 1st year algebra (\S\ref{nonSporadics}),  we expect there are only
finitely many monodromy groups of primitive genus 0 covers. Denote this nearly  proven genus 0
problem     as \Prob 2. We call the exceptional groups  {\sl 0-sporadic\/}. Example: Finitely many
Chevalley groups are 0-sporadic. A proven result: Among {\sl polynomial\/} 0-sporadic groups,
precisely three  produce covers falling in nontrivial {\sl reduced families\/}. Each 
(miraculously) defines one natural genus 0
$\bQ$ cover of the 
$j$-line. The latest Nielsen class techniques apply to these
dessins d'enfant to see their subtle arithmetic and interesting
cusps. 

John Thompson earlier considered another genus 0 problem: 
To find $\theta$-functions uniformizing certain genus 0 (near) modular curves. We call 
this \Prob 1. We pose uniformization problems for $j$-line covers
in two cases. First: From the three 0-sporadic examples of \Prob2. Second: From finite 
collections of genus 0 curves with aspects of \Prob1.    

\end{abstract}

\subjclass{Primary  11F32,  11G18, 11R58, 14G32; Secondary 20B05, 
20C25, 20D25, 20E18, 20F34}

\thanks{Thanks to NSF 
Grant
\#DMS-0202259, support for the {\sl Thompson\/} Semester at U.~of Florida} 

\maketitle 
\def\addcontentsline#1#2#3{%
\addtocontents{#1}{\protect\contentsline{#2}{#3}{}}}

\setcounter{tocdepth}{2}
\!\!\!\!\tableofcontents

\section{Genus 0 themes} We denote projective 1-space 
$\prP^1$ with a specific uniformizing variable
$z$ by
$\prP^1_z$. This decoration helps track distinct domain and
range copies of $\prP^1$. We use classical groups: $D_n$ (dihedral), $A_n$ (alternating) and $S_n$
(symmetric) groups of degree $n$;
$\PGL_2(K)$, M\"obius transformations over
$K$; and  generalization of these to $\PGL_{u+1}(K)$ acting on $k$-planes, $0\le k\le u-1$, of 
$\prP^u(K)$ ($K$ points of projective $u$-space). \S\ref{bfineMod} denotes the space of four
distinct unordered points of
$\prP^1_z$ by $U_4=((\prP_z^1)^4\setminus \Delta_4)/S_4$. For $K$ a field, $G_K$ is its
absolute Galois group  (we infrequently allude to this for some applications). 

A  $g\in S_n$ has an index
$\ind(g)=n-u$ where $u$ is the number of disjoint cycles in $g$. Example: $(1\,2\,3)(4\,5\,6\,7)\in
S_7$ has index $7-2=5$. 
Suppose $\phi: X\to \prP^1_z$ is a degree
$n$ cover
(of compact Riemann surfaces). We assume the reader knows about the genus $g_X$ of 
$X$ given a {\sl branch cycle description\/} $\bg=(\row g r)$ for $\phi$ (\S\ref{standEquiv}):
$2(n+g_X-1)=\sum_{i=1}^r
\ind(g_i)$ (\cite[\S2.2]{VB} or \cite[Chap.~4]{FB}). 

\subsection{Production of significant genus 0 curves} 
Compact Riemann surfaces arose to codify two variable algebraic relations. The moduli of covers is
a refinement. For a given genus, this refinement has many 
subfamilies, with associated discrete invariants. Typically, these invariants are some type of
{\sl Nielsen class\/} (\S\ref{nc1}).  The parameter space for such a family can have any dimension.
Yet, we benefit by comparing  genus $g$ moduli  with cases where the cover moduli  has dimension 1.
The gains come by detecting the moduli resemblances to, and differences from,   modular
curves. 

We emphasize: Our technique produces a parameter for families of equations
from an essential defining property of the equations. We aim 
for a direct description of that parameter using the defining property. These
examples connect two themes useful for intricate work on families of equations. We
refer to these as two {\sl genus 0 problems\/} considered  by John Thompson. Our first version is a
naive form.

\begin{edesc}  \label{prob0}
\item \label{prob0a} \Prob1: If a moduli space of algebraic relations is a genus 0
curve, where can we find  a uniformizer for it? 
\item \label{prob0b} \Prob2: Excluding symmetric, alternating, cyclic and dihedral groups,  what
others are monodromy groups for primitive genus 0 covers?
\end{edesc} 
Our later versions of each statement explicitly connect with well-known
problems.  
\cite{RETExp},  \cite{Fr-Schconf}, \cite{GMS}  show examples  
benefiting from the {\sl monodromy method}. 

We use the latest Nielsen class techniques (\S\ref{nc1}; excluding the {\sl
shift-incidence matrix\/}   from \cite[\S9]{BFr} and \cite{schurtype}) to understand these
parameter spaces as natural 
$j$-line covers. They are not modular curves, though emulating \cite{BFr} we observe modular
curve-like properties. 

Applying the Riemann-Roch Theorem to \Prob1 does not actually answer the
underlying question. Even when (say, from  Riemann-Hurwitz) we find  a curve has genus 0  
that doesn't trivialize  uniformizing its function field. Especially when the moduli space has
genus 0: We seek  a uniformizer defined by the moduli problem. That the $j$-line covers of our examples have genus 0 allows them to effectively
parametrize (over a known field) solutions to problems with a considerable literature.
We justify that \wsp albeit, briefly \wsp to give weight to our choices. 

\cite{FKraTheta}  uses
$k$-division 
$\theta$-nulls (from elliptic curves) to uniformize certain modular curves. Those
functions, however, have
nothing to do with the moduli for our examples. Higher dimensional 
$\theta$-nulls on the (1-dimensional) upper-half plane are akin to, but not the same as, what 
quadratic form people  call
$\theta$-functions. The former do appear in our examples of \Prob1 (\S\ref{MMoon}; 
we explain more there on this $\theta$-confusing point). We are new at Monstrous Moonshine, though
the required expertise documented by \cite{Ray} shows we're not alone. Who can predict from where
significant uniformizers will arise? If a
$\theta$-null intrinsically attaches to the moduli problem, we'll use it. 

\subsection{Detailed results} \S\ref{nonSporadics} has the precise definition of 0-sporadic
(also, polynomial 0-sporadic and the general $g$-sporadic).  All modular curves appear  as (reduced;
see
\S\ref{nc2}) families of genus 0 covers \cite[\S2]{FrGGCM}. Only, however,  finitely many modular
curves have genus 0. Our first examples are moduli spaces for polynomial 0-sporadic  groups 
responding to
\eql{prob0}{prob0b}. These moduli spaces are genus 0 covers of the $j$-line, responding to 
\eql{prob0}{prob0a}, yet they are not modular curves. 

\subsubsection{The moduli of three 0-sporadic monodromy groups}   Three polynomial 0-sporadic
groups stand out on M\"uller's list (\S\ref{MullList}): These have 
degrees
$n=7, 13$ and
$15$, with {\sl four\/} branch points (up to reduced equivalence \S\ref{nc2}). Their families have
genus 0 suiting question 
\eql{prob0}{prob0a}. Each case sums up in {\sl one\/} (for each
$n\in \{7,13,15\}$) genus 0 
$j$-line cover ($\psi_n: X_n\to \prP^1_j$) over $\bQ$. We tell much about these spaces,
their {\sl b-fine moduli\/} properties and their cusps (Prop.~\ref{H7comp} and
Prop.~\ref{H13comp}). 

We stress the uniqueness of $\psi_n$, and its $\bQ$ structure.    
Reason: The moduli problem defining it does {\sl not\/} produce polynomials over
$\bQ$. Let $K_{13}$ be the unique degree 4 extension of $\bQ$ in $\bQ(e^{2\pi i/13})$. For $n=13$,  a parameter uniformizing 
$X_{13}$ as a
$\bQ$ space gives coordinates for the four (reduced) families of polynomials over $K_{13}$. These 
appear as solutions of Davenport's problem (\S\ref{DPs}). Resolving 
Davenport's problem (combining  group theory and arithmetic in
\cite{FrRedPol}, \cite{feit}, \cite{RETExp} and  
\cite{Fr-Schconf}) suggested that genus 0 covers have a limited set of monodromy
groups (\Prob2).  
\cite[\S5]{Fr-Schconf} and \cite[App.~D]{FrExcTow} has more on the  applications.

\subsubsection{Modular curve-like genus 0 and 1 curves} \label{genus01Mod} Our second example is
 closer to classical modular curve themes, wherein uniformizers of certain genus 0 curves
appear  from $\theta$-functions. \S\ref{MMoon} briefly discusses Monstrous Moonshine for
comparison. Our situation is the easiest rank 2 {\sl Modular Tower}, defined by the group
$F_2\xs
\bZ/3$, with $F_2$ a free group on two generators. We call this the $n=3$ case. For
each prime
$p\not =3$, and for each integer
$k\ge 0$, there is a  map 
$\bar \psi_{p,k}: \bar \sH_{p,k}^\rd\to \prP^1_j$ with $\bar \sH_{p,k}^\rd$ a (reduced) moduli
space. The gist of Prop.~\ref{H3NC}: Each such  $\bar \sH_{p,k}^\rd$ {\sl is\/} nonempty (and $\bar
\psi_{p,k}$ is a natural $j$-line cover). For a given
$p$ the collection 
$\{\bar \sH_{k,p}^\rd\}_{k\ge 0}$ forms a projective system; we use this below. 

We contrast the $n=3$ case with the 
case $F_2\xs
\bZ/2$. This is the $n=2$ case: $-1\in \{\pm 1\}=\bZ/2$ maps generators of $F_2$ to their
inverses.  We do this to give the {\sl Modular Tower\/} view of noncomplex multiplication in Serre's
{\sl Open Image Theorem\/} (\cite[IV-20]{SeAbell-adic}).  
The gist of Prop.~\ref{H2NC}: 
Serre's Theorem covers less territory than might be expected. 
\cite[\S5.2-5.3]{FrExcTow} applies this to producing genus 0 {\sl exceptional\/}
covers. This shows Davenport's problem is not an isolated example.    

The (strong) Main
Conjecture on Modular Towers \cite[\S1.2]{schurtype} says the following for $n=3$.  Only finitely
many 
$\bar \sH_{p,k}^\rd\,$s have a genus 0 or 1 (curve) component. These are moduli spaces, and rational
points on such components interpret significantly for many problems. For $n=2$ the
corresponding spaces are modular curves, and they have but one component. Known values of
$(p,k)$ where $\bar \sH_{p,k}^\rd$ has more than one component include $p=2$, with $k=0$ and 1.
\S\ref{projNCs} explains the  genus 0 and 1 components for the second
of these. For  $j$-line covers coming from Nielsen classes there is a
map from elements in the Nielsen class to cusps. 
The most modular curve-like property of these spaces is that they fall in sequences attached 
to a prime $p$. Then, especially significant are the {\sl g-$p'$ cusps\/}  
(\S\ref{cusptype}). 

Most studied  of the g-$p'$ cusps are those we call {\sl Harbater-Mumford}. For example, in this language the
width $p$ (resp.~1) cusp on the modular curve 
$X_0(p)$ ($p$ odd)  is the  Harbater-Mumford
(resp.~shift of a Harbater-Mumford) cusp (Ex.~\ref{HMreps}). Prob.~\ref{projNielCusp} is a  
conjectural refinement of Prop.~\ref{H2NC}. This distinguishes those components containing H-M cusps
among the collection of all components of $\bar \sH_{p,k}^\rd\,$s. 

If right, we can expect
applications for those components  that parallel \cite{SeAbell-adic} (for  $n=2$).   
We conclude with connections between  \Prob1 and \Prob2. This gives an historical context for  
using   cusps of $j$-line covers from Nielsen classes.
 \begin{edesc} \label{prob1-prob2}
\item \label{prob1-prob2a} Comparison of our computations with computer construction of
equations for the Davenport pair families in \cite{couveignes2} (\S\ref{expEqs}).
\item \label{prob1-prob2c} The influence of John Thompson on \Prob1 and \Prob2 (\S\ref{Thomp}). 
\end{edesc}  
    
\section{Examples from \Prob 2} \label{MullerDPs} We briefly state Davenport's problem and review
Nielsen classes. Then, we explain Davenport's problem's special place among 
polynomial 0-sporadic groups. 

\subsection{Review of Davenport's problem} \label{DPs} The name
{\sl Davenport pair\/} (now called S(trong)DP) first referred to pairs $(f,g)$ of polynomials, over
a number field
$K$ (with ring of integers
$\sO_K$) satisfying this. 
\begin{triv} \label{rangeSt} Range equality:
$f(\sO/\bp)=g(\sO/\bp)$ for almost all prime ideals
$\bp$ of $\sO_K$.
\end{triv} \noindent  Davenport asked this question just for polynomials over $\bQ$. We also assume  there should be no linear change of variables (even over $\bar
K$) equating the polynomials. This is an hypothesis that we intend from this point.  There is a complete description of the Davenport pairs where $f$ is indecomposable (\S\ref{nc1}). 

In 
this case such polynomials are {\sl i(sovalent)DP\/}s: Each value in the range of $f$ or $g$ is
achieved with the same multiplicity by both polynomials. As in
\cite[7.30]{AFH}, this  completely  describes all such pairs even with a weaker
hypothesis: 
\eqref{rangeSt} holds for just $\infty$-ly many prime ideals of $\sO_K$. 

\subsection{Review of Nielsen classes} \label{nc1} A Nielsen class is
a combinatorial invariant attached to a (ramified) cover $\phi: X\to \prP^1_z$ of compact
Riemann surfaces. If $\deg(\phi)=n$, let $G_\phi\le S_n$ be the monodromy group of $\phi$.
The cover is {\sl primitive\/} or {\sl indecomposable\/} if the following equivalent properties
hold.
\begin{edesc} \item It has no decomposition  $X\mapright{\phi'} W \mapright{\phi''}
\prP^1_z$, with 
$\deg(\phi')\ge 2$, 
$\deg(\phi'')\ge 2$.
\item $G_\phi$ is a primitive subgroup of $S_n$. 
\end{edesc} 

Let 
$\bz$ be the branch points of $\phi$,  
$U_\bz=\prP^1_z\setminus
\{\bz\}$ and $z_0\in U_\bz$. Continue points over $z_0$ along
paths based at $z_0$, having the following form: $\gamma\cdot \delta_i\gamma^{-1}$, $\gamma, \delta$
on
$U_\bz$ and $\delta_i$ a small clockwise circle around $z_i$. This attaches to $\phi$ a collection
of conjugacy classes $\bfC=(\row {\text{C}} r\}$, one for each $z_i\in \bz$. The
associated  {\sl  Nielsen  
class\/}: $$\ni=\ni(G,\bfC)=\{\bg= (\row g r) \mid g_1\cdots g_r=1, 
\lrang{\bg}=G \text{\ 
and\ } \bg\in \bfC\}.$$ {\sl Product-one\/} is the name for the condition $g_1\cdots g_r=1$. 
From it come invariants attached to spaces defined by Nielsen classes.
{\sl Generation\/} is the name of condition $\lrang{\bg}=G$. Writing
$\bg\in\bfC$ means  the 
$g_i\,$s define conjugacy classes in
 $G$, possibly in another order, the  same (with multiplicity) as 
those in 
$\bfC$. So, each cover $\phi: X\to\prP^1_z$ has a uniquely attached Nielsen class: 
$\phi$ is in the Nielsen class $\ni(G,\bfC)$. 
 
\subsubsection{Standard equivalences} \label{standEquiv}
Suppose we have  $r$ (branch) points $\bz$, and a corresponding choice $\bar \bg$ of {\sl classical 
generators\/} for 
$\pi_1(U_\bz,z_0)$ \cite[\S1.2]{BFr}. Then, $\ni(G,\bfC)$ lists all 
homomorphisms 
from $\pi_1(U_\bz,z_0)$ to $G$. These give a cover with branch points $\bz$ 
associated to $(G,\bfC)$. Elements of $\ni(G,\bfC)$ are {\sl branch cycle
descriptions\/} for these covers relative to $\bar\bg$.  
Equivalence classes of 
covers with a fixed set of branch points $\bz$, correspond one-one to equivalence classes on 
$\ni(G,\bfC)$. We caution: Attaching a Nielsen class
representative to a cover requires picking one from many possible
$r$-tuples $\bar \bg$. So, it is not an algebraic process.

\cite[\S3.1]{BFr} reviews common equivalences with examples and relevant definitions. 
such as the group
$\sQ''$ below. Let
$N_{S_n}(G,\bfC)$ be those $g\in S_n$ normalizing $G$ and permuting the
collection of conjugacy classes in $\bfC$. Absolute (resp.~inner) equivalence classes of
covers (with branch points at
$\bz$) correspond to the elements of
$\ni(G,\bfC)/N_{S_n}(G,\bfC)=\ni(G,\bfC)^\abs$ (resp.~$\ni(G,\bfC)/G=\ni(G,\bfC)^\inn$). Especially
in 
\S\ref{7-13-15} we use {\sl absolute}, {\sl inner\/} and for each of these {\sl reduced\/}
equivalence. These show how to compute specific properties of 
$\sH(G,\bfC)^\abs$, $\sH(G,\bfC)^\inn$ and their reduced versions, parametrizing the
equivalences classes of covers as $\bz$ varies. 

\subsubsection{Reduced Nielsen classes} \label{nc2} Reduced
equivalence  corresponds each cover $\phi: X\to \prP^1_z$ to
$\alpha\circ \phi: X\to \prP^1_z$, running over $\alpha\in \PGL_2(\bC)$. 
 If $r=4$, a nontrivial equivalence arises because for any $\bz$ there is a Klein 4-group in
$\PGL_2(\bC)$ mapping $\bz$ into itself. (An
even larger group leaves special, {\sl elliptic\/}, $\bz$ fixed.) This interprets as an equivalence
from   a Klein 4-group $\sQ''$ acting on Nielsen classes (\S\ref{deg7j-line}). 
 Denote associated absolute 
(resp.~inner) reduced
Nielsen class representatives  by   
$$\ni(G,\bfC)/\lrang{N_{S_n}(G,\bfC),\sQ''}\!=\!\ni(G,\bfC)^{\abs,\rd}
(\text{resp.~}\!\!\ni(G,\bfC)/\lrang{G,\sQ''}\!=\!\ni(G,\bfC)^{\inn,\rd}).$$  

These give formulas for branch cycles presenting
$\sH(G,\bfC)^{\abs,\rd}$ and $\sH(G,\bfC)^{\inn,\rd}$ as upper half plane quotients 
by a finite index subgroup of $\PSL_2(\bZ)$. This is a ramified cover of the classical $j$-line 
branching over the traditional places (normalized in \cite[Prop.~4.4]{BFr} to 
$j=0,1,\infty$).  Points over $\infty$ are {\sl meaningfully\/}  called cusps. Here is an
example of how we will use these. 

\S\ref{deg7j-line} computes from these tools  two $j$-line covers (dessins d'enfant)  conjugate
over $\bQ(\sqrt{-7})$ parametrizing reduced classes of degree 7 Davenport polynomial pairs. In
fact, the (by hand) Nielsen class computations  show the covers are equivalent over
$\bQ$. This same phenomenon happens for all pertinent degrees $n=7,13,15$, though the field
$\bQ(\sqrt{-7})$ changes and corresponding Nielsen classes have subtle differences. You can see
these by comparing $n=7$ with $n=13$ (\S\ref{deg13j-line}). 

\subsection{Davenport Pair monodromy groups}  \label{diffSets} Let $u\ge 2$. A {\sl Singer cycle\/}
is a  generator 
$\alpha$ of $\bF_{q^{u+1}}^*$, acting by multiplication as a 
matrix through identifying  
$\bF_q^{u+1}$ and $\bF_{q^{u+1}}$.  Its image in 
$\PGL_{u+1}$  acts on points and
hyperplanes of $\prP^u(\bF_q)$. 

Let $G$ be a group 
with two doubly transitive representations $T_1$ and $T_2$,  equivalent as group representations,
yet not permutation equivalent, and with 
$g_\infty\in G$  an
$n$-cycle in $T_i$, $i=1,2$. Excluding  the well-documented degree 11 case, $G$ has 
these properties (\cite{feit}, \cite{RETExp},
\cite[\S8]{Fr-Schconf}) with   
$\C_\alpha$ the conjugacy class of $g_\infty$.   
\begin{edesc} \label{Davgroups} \item $G\ge \PSL_{u+1}(\bF_q)$; $T_1$ and $T_2$ 
act on points and hyperplanes of
$\prP^u$. 
\item  $n=(q^{u\np1}-1)(q-1)$ and $g_\infty$ is a Singer
$n$-cycle. 
\end{edesc}  

\subsubsection{Difference sets} 
Here is how difference sets (\S\ref{diffSet})
appear from \eqref{Davgroups}. 

\begin{defn} Call $\sD\le \bZ/n$ a {\sl difference set\/} if nonzero differences from 
$\sD$ distribute evenly over $\bZ\setminus \{0\}$. The multiplicity $v$ of the appearance of 
each element is the multiplicity of $\sD$. Regard a  difference set and any translate of it as
equivalent.
\end{defn} Given the linear representation  from $T_1$ on
$\row x n$, the representation $T_2$ is on  $\{\sum_{i\in \sD+j} x_i\}_{j=1}^n$ with 
$\sD$ a difference set. The multiplier group $M_n$ of $\sD$ is  
$$\{m\in (\bZ/n)^*\mid m\cdot\sD=\sD+j_m, \text{ with }j_m\in \bZ/n\}.$$ We say $m\cdot \sD$
is equivalent to $\sD$ if $m\in M_n$. In $\PGL_{u+1}(\bF_q)$,  $\alpha^m$ is 
conjugate to $\alpha$ exactly when  $m\in M_n$. The difference set
$-\sD$ corresponds to an interchange between the representations on points and hyperplanes. 
Conjugacy classes in
$\PGL_{u+1}(\bF_q)$ of powers of
$\alpha$ correspond one-one to difference sets  equivalence classes mod $n$. 

\subsubsection{Davenport pair Nielsen classes} We label our families of polynomials by an 
$m\in (\bZ/n)^*\setminus M_n$ that multiplies the difference set to an inequivalent difference set.
Our families are  of absolute  reduced classes of covers in a Nielsen class. 
Conjugacy classes have the form
$(\row {\C} {r-1},\C_\alpha)$, the groups satisfy $G\ge \PSL_{u+1}(\bF_q)$, and covers in the
class have genus 0. Two results of Feit show $r-1\le 3$: 
\begin{edesc} \item $\ind(\C)\ge n/2$ if $\C$ is a conjugacy classes of 
$\PGL_{u+1}(\bF_q)$ \cite{feit1}. 
\item $\ind(\C)\ge n(q-1)/q$ if $\C$ is a conjugacy class in
$\text{P}\Gamma\text{L}_{u+1}(\bF_q)$ \cite{feit2}. 
\end{edesc}. 
Conclude: If $n$ is odd, then $r-1=3$ implies the following for respective cases. 
\begin{edesc} \label{involutions} \item \label{involutions7} $n=7$: $\C_i\,$s are in the
conjugacy class of transvections  (fixing a hyperplane), with index 2. So, they are all
conjugate. 
\item \label{involutions13} $n=13$: $\C_i\,s$ are in the conjugacy class of elements fixing a 
a hyperplane (determinant -1),  so they generate $\PGL_3(\bZ/3)$ and all are conjugate. 
\item \label{involutions15} $n=15$: Two of the $\C_i\,$s are in the conjugacy class of
transvections, and one fixes just a line. 
\item $n\ne 7,13,15$: $r=3$.
\end{edesc}
Transvections in $\GL_{u+1}$ have the form $\bv\mapsto
\bv+\mu_H(\bv)\bv_0$ with
$\mu_H$ a linear functional with kernel a hyperplane $H$, and $\bv_0\in H\setminus
\{0\}$
\cite[p.~160]{eartin}.
 For $q$ a power of two, these are involutions: 
exactly those fixing points of a hyperplane. For $q$ odd, 
involutions fixing the points of a hyperplane (example, induced by a reflection in $\GL_{n+1}$ in
the hyperplane) have the maximal number of fixed points. When
$q$ is a power of 2, there are involutions fixing precisely one line. Jordan normal form 
shows these are conjugate to $$\begin{pmatrix} 1&0&0&0\\ a&1&0&0 \\ 0& b & 1 & 0\\ 0 & 0
&c&1\end{pmatrix}.$$ This is an involution if and only if $ab=bc=0$. So, either $b=0$ or 
$a=c=0$. Only in the latter case is the fixed space a line. So, given the conjugacy class of
$g_\infty$, only one possible Nielsen class defines polynomial Davenport pairs when $n=15$. 

\subsection{M\"uller's list of primitive polynomial monodromy groups} \label{MullList} We 
reprise M\"uller's list of the polynomial 0-sporadic groups   
(\cite{primPol}). Since such a group comes from a primitive cover, it goes with a primitive
permutation representation. As in \S\ref{nonSporadics} we regard two inequivalent representations
of the same group as different 0-sporadic groups. We emphasize how pertinent was
Davenport's  problem.    Exclude  (finitely many) groups with simple core 
$\PSL_2(\bF_q)$ (for very small $q$) and the Matthieu groups of degree 11 and 23. Then, all 
remaining  groups from his list are from 
\cite{FrRedPol} and have properties  \eqref{Davgroups}. 
\cite[\S9]{Fr-Schconf} reviews and completes this. These six polynomial 0-sporadic groups (with
corresponding Nielsen classes) all give  Davenport pairs. 
We concentrate on those three having one extra
property:   
\begin{triv} \label{dim1mod} Modulo
$\PGL_2(\bC)$ (reduced equivalence as in \S\ref{nc2})  action, the space of these
polynomials has dimension at least (in all cases, equal) 1.  \end{triv}

We restate the properties shown above for these polynomial covers.
\begin{itemize} \item They have degrees  
from
$\{7,13,15\}$ and $r=4$.
\item All  $r\ge 4$ branch point indecomposable polynomial maps in an iDP pair
(\S\ref{MullerDPs}) are in one of the respectively, 2, 4 or 2 Nielsen classes corresponding to the
respective degrees 7, 13, 15.
\end{itemize} 
\cite{FrRedPol} outlines this.  \cite[\S2.B]{RETExp} uses it to explain Hurwitz
monodromy action. 

Let 
$\sH_7^\D$, $\sH^\D_{13}$ and $\sH^\D_{15}$ denote the spaces of polynomial covers that are
one from a Davenport pair having four branch points (counting
$\infty$). The subscript decoration corresponds to the respective
degrees. We assume absolute, reduced equivalence (as in \S\ref{nc2}). 

\section{Explanation of the components for $\sH_7^\D,\sH^\D_{13},\sH^\D_{15}$}
\label{7-13-15} The analytic families of respective degree $n$ polynomials fall into
several components. Each component, however, corresponds to a
different Nielsen class. For example,  $\sH_7^\D$, the space of degree 7 Davenport polynomials
has two components: with a polynomial associated to a polynomial in the other as
a Davenport Pair. 
\subsection{Explicit difference sets}  Often we apply Nielsen classes to problems about the
realization of covers over
$\bQ$. Then, one must assume
$\bfC$ is rational. \cite[Thm.~2]{FrRedPol} proved (free
of the finite simple group classification) that {\sl no\/}  indecomposable polynomial DPs could
occur over
$\bQ$.  

There are polynomial covers in our Nielsen classes.  
So, $\bg\in \ni(G,\bfC)$ has an $n$-cycle entry, 
$g_\infty$. These conjugacy classes for all $n$ are similar: 
$$\bfC_{n,u;k_1,k_2,k_3}=(\C_{2^{k_1}},\C_{2^{k_2}},\C_{2^{k_3}},\C_{(\alpha)^u}),$$
where
$\C_{2^k}$ denotes a (nontrivial) conjugacy class of involutions of index $k$ 
and
$(u,n)=1$.  We explain the case $n=7$ in the following rubric. 

\begin{edesc} \label{propDiff} \item \label{propDiffa} Why $\C_{2^{k_i}}=\C_{2^2}$,
$j=1,2,3$,  is the conjugacy class of a transvection (denote the resulting conjugacy
classes by $\bfC_{7,u;3\cdot 2^2}$).   
\item \label{propDiffb} Why the two components of $\sH_7^\D$  are
$$\sH_{+}=\sH(\PSL_3(\bZ/2),\bfC_{7,1;3\cdot 2^2})^{\abs,\rd} \text{ and }
\sH_{-}=\sH(\PSL_3(\bZ/2),\bfC_{7,-1;3\cdot 2^2})^{\abs,\rd}.$$ 
\item \label{propDiffc} Why the closures of $\sH_{\pm}$ over $\prP^1_j$ (as natural
$j$-line covers) are equivalent genus 0, degree 7 covers  over $\bQ$.  
\item \label{propDiffd} Why $\sH_{\pm}$, as degree 7 Davenport moduli,  have
definition field $\bQ(\sqrt{-7})$. 
\end{edesc}   

\subsection{Difference sets give properties \eql{propDiff}{propDiffa} and 
\eql{propDiff}{propDiffb}} \label{diffSet} 
\cite[Lem.~4]{FrRedPol}  normalizes Nielsen class representatives
$(g_1,g_2,g_3,g_\infty)$ for DP covers so that in both representations $T_{j,n}$,
$j=1,2$, $g_\infty=(1\,2\,\dots\, n)^{-1}$ identifies with some allowable
$\alpha_n$. 

Regard
$(g_\infty)T_{1,n}$ ($g_\infty$ in the representation $T_{1,n}$) 
as translation by -1 on $\bZ/n$. Then, $(g_\infty)T_{2,n}$ is translation by -1 on the collection
of sets 
$\{\sD+c\}_{c\in \bZ/n}$. Take $v$ to be the multiplicity of $\sD$. Then, 
$v(n-1)=k(k-1)$ with $1<k=|\sD|< n-1$. In
$\PGL_{u+1}(\bF_q)$, $\alpha_n^u$ is conjugate to $\alpha_n$ if and only if $u\cdot
\sD$ is a translation of $\sD$. That is, $u$ is a {\sl multiplier\/} of the design. Also, -1
is always a nonmultiplier \cite[Lem.~5]{FrRedPol}.  Here $n=7$, so 
$v(n-1)=k(k-1)$ implies
$k=3$ and $v=1$. 
You find mod 7: 
$\sD=\{1,2,4\}$ and $-\sD$ are the only difference sets mod translation. The multiplier of $\sD$ is
$M_7=\lrang{2} \le (\bZ/7)^*$: $2\cdot \sD=\sD+c$ ($c=0$ here). 

Covers in this Nielsen class have genus 0. Now use that in $\PSL_3(\bZ/2)$,
(acting on points) the minimal possible index is 2. We labeled the transvection conjugacy class 
achieving that as $\C_{2^2}$.  So, 2 is the index of entries of $\bg$ for all finite branch
points. We have shown \eql{propDiff}{propDiffb} has the only
two possible Nielsen classes. 

\subsection{Completing property \eql{propDiff}{propDiffb}} We now show the two
spaces $\sH_{\pm}$ are irreducible, completing property \eql{propDiff}{propDiffb}.  

Computations in \cite[p.~349]{Fr95} list absolute Nielsen class representatives with 
$g_\infty$ the 4th entry. Label finite branch cycles $(g_1,g_2,g_3)$ (corresponding
to a polynomial cover, having $g_\infty$ in fourth position) as $Y_1,\dots,Y_7$. There
are 7 up to conjugation by $\lrang{g_\infty}$, the only allowance left for
absolute equivalence. \begin{equation}\begin{array}{rlrl} \label{7dav}
Y_1:& \!\!\!\!\!\!((3\,5)(6\,7),((4\,5)(6\,2),(3\,6)(1\,2));&  
\!\!\!\!\!Y_2:&\!\!\!\!\!\!((3\, 5)(6\, 7),(3\, 6)(1\, 2),(3\, 1)(4\, 5)); \\
Y_3:& \!\!\!\!\!((3\, 5)(6\, 7),(1\, 6)(2\, 3), (4\, 5)(6\, 2));&  
\!\!\!\!\!\!Y_4:&\!\!\!\!\!((3\, 5)(6\, 7),(1\, 3)(4\, 5),(2\, 3)(1\, 6));\\
Y_5:& \!\!\!\!\!\!((3\, 7)(5\, 6),(1\, 3)(4\, 5),(2\, 3)(4\, 7));& 
\!\!\!\!\!Y_6:&\!\!\!\!\!\!((3\, 7)(5\, 6),(2\, 3)(4\, 7),(1\, 2)(7\, 5));\\
Y_7:& \!\!\!\!\!\!((3\, 7)(5\, 6),((1\, 2)(7\, 5),(1\, 3)(4\, 5)).&&
\end{array}\end{equation}

The element $(3\,5)(6\,7)$ represents a transvection fixing  points of a line
$\Longleftrightarrow$ elements of $\sD$. Note: All entries in $\row Y 7$ of Table
\eqref{7dav}  correspond to transvections. So these are conjugate to $(3\,5)(6\,7)$. 
From this point everything reverts to Hurwitz monodromy calculation
with The elements
$q_i$, $i=1,2,3$. Each acts by a twisting action on any 4-tuple representing a Nielsen
class element. For example, 
\begin{equation} \label{twisting} q_2: \bg\mapsto (\bg)q_2=(g_1,g_2g_3g_2^{-1}, g_2,g_4).
\end{equation}   

\subsection{The analog for $n=13$ and 15} \label{n=13} Up to translation there are 4 difference sets
modulo 13. All cases are similar.  So we choose $\sD_{13}=\{ 1, 2, 4, 10 \}$ to be specific. 
Others come  from multiplications by elements of $(\bZ/13)^*$. Multiplying by
$\lrang{3}=M_{13}$  (\S\ref{diffSets}) preserves this difference set (up to
translation). 

Each
of $g_1, g_2, g_3$ fixes all points of some line, and one extra point, a total of five
points.  Any column matrix $A=(\bv|\be_2|\be_2)$ with $\bv$ anything, and
$\{\be_i\}_{i=1}^3$ the standard basis of $\bF_3^3$, fixes all points of the plane
$P$ of vectors with 0 in the 1st position. Stipulate one other fixed point in
$\prP^2(\bF_3)$ to determine $A$ in $\PGL_3(\bF_3)$. 

Let  $\zeta_{13} =
e^{2\pi  i/13}$. Identify $G(\bQ(\zeta_{13})/\bQ)$ with
$(\bZ/(13))^*$.  Let $K_{13}$ be the fixed field
of $M_{13}$ in $\bQ(\zeta_{13})$.  Therefore $K_{13}$ is
$\bQ(\zeta_{13} + \zeta^3_{13} + \zeta^9_{13})$, a degree 4 extension of $\bQ$. Akin to
when $n=7$ take $g_\infty=(1\,2\,\dots \,12\,13)^{-1}$. The distinct
difference sets (inequivalent under translation) appear as $6^j\cdot \sD_{13}$, $j=0,1,2,3$
(6 generates the order 4 cyclic subgroup of $(\bZ/13)^*$). 

For future reference, though we don't do the case $n=15$ completely here, $\sD_{15}=\{0, 5, 7,
10,11,13,14\}$ is a difference set $\mod 15$. Its multiplicity is $v$ in $v(15-1)=k(k-1)$ forcing
$k=7$ and $v=3$. The multiplier group is $M_{15}=\lrang{2}$, so the minimal field of
definition of polynomials in the corresponding Davenport pairs is
$\bQ(\sum_{j=0}^3 \zeta_{15}^{2^j})$, the degree 2 extension of $\bQ$ that $\frac{1+\sqrt{-15}}{2}$
generates. So, this case, like $n=7$, has two families of polynomials appear as
associated Davenport pairs.   

As with $n=7$, use the notation $\C_{(\alpha)^u}$, $u\in (\bZ/13)^*$ for the
conjugacy classes of powers of $13$-cycles. Prop.~\ref{H13comp} shows the following. 

\begin{edesc} \label{propDiff13} \item \label{propDiff13a} Why $\C_{2^{k_i}}=\C_{2^4}$,
$i=1,2,3$,  is the conjugacy class fixing all points of a plane (denote the resulting conjugacy
classes by $\bfC_{13,u;3\cdot 2^4}$).   
\item \label{propDiff13b} Why the four components of $\sH_{13}^\D$  are
$$\sH_{i}=\sH(\PGL_3(\bZ/3),\bfC_{13,6^i;3\cdot 2^4})^{\abs,\rd}, i=0,1,2,3.$$ 
\item \label{propDiff13c} Why  closure of all $\sH_{i}\,$s over $\prP^1_j$  are equivalent genus
0, degree 13 covers  over $\bQ$. \!Yet,  $\!\sH_{i}\!$, as degree \!13 Davenport moduli,  has
definition field $K_{13}$. 
\end{edesc} 

\section{$j$-line covers for polynomial $\PGL_3(\bZ/2)$ monodromy} \label{deg7j-line} 
We
produce branch cycles for the two $j$-line covers
$\bar \psi_{\pm}: \bar \sH_{\pm}\to \prP^1_j$, $i=1,2$, parametrizing degree 7 Davenport
(polynomial) pairs. Prop.~\ref{H7comp} says they are equivalent as
$j$-line covers, though distinct as families of degree 7 covers. 

\subsection{Branch cycle presentation and definition field} Our original notation,
$\sH_{\pm}$ is for points of
$\bar
\sH_{\pm}$ not lying over
$j=\infty$.  Each
$\bp_+\in \sH_{+}(\bar\bQ)$ has a corresponding point $\bp_-\in
\sH_{-}(\bar\bQ)$ denoting a collection of polynomial pairs $$\{(\beta\circ
f_{\bp_+},\beta\circ f_{\bp_-})\}_{\beta\in \PGL_2(\bar\bQ)}.$$  The absolute Galois group
of  
$\bQ(\bp_+)=\bQ(\bp_-)$ maps this set into itself. Representatives for absolute Nielsen
class elements in Table
\eqref{7dav} suffice for our calculation. This is because reduced equivalence adds the
action of 
$\sQ''=\lrang{(q_1q_2q_3)^2, q_1q_3^{-1}}$. This has the following effect. 
\begin{triv} \label{redLoc} Each $\bg\in \ni_+$ is reduced equivalent to a unique absolute Nielsen
class representative with $g_\infty$ in the 4th position. \end{triv} 
\noindent Example: If $\bg$ has $g_\infty$ in the 3rd position, apply $q_1^{-1}q_3$ to put
it in the 4th position. 

\cite[Prop.~4.4]{BFr} produces branch cycles for $\bar
\psi_{\pm}$. Reminder: The images of $\gamma_0=q_1q_2$ 
and $\gamma_1=q_1q_2q_3$ in $\lrang{q_1,q_2,q_3}/\sQ''=\PSL_2(\bZ)$ identify with canonical
generators of respective orders 3 and 2.  The product-one condition
$\gamma_0\gamma_1\gamma_\infty=1$ with 
$\gamma_\infty=q_2$ holds $\mod \sQ''$. Compute that  $q_1$ with 
\eqref{twisting} action is
$q_1^*=(3\,5\,1)(4\,7\,6\,2)$;  $q_2$ acts as 
$q_2^*=(1\,3\,4\,2)(5\,7\,6)$. So, $\gamma_0$ acts as
$\gamma_0^*=(3\,7\,5)(1\,4\,6)$. From product-one,  $\gamma_1$
acts as $\gamma_1^*=(3\,6)(7\,1)(4\,2)$. 

Denote the respective conjugacy classes of
$(\gamma_0^*,\gamma_1^*,\gamma_\infty^*)=\pmb\gamma^*$ in the group they generate by 
$\bfC^*=(\C_0,\C_1,\C_\infty)$. Denote $\prP^1_j\setminus
\{\infty\}$ by $U_\infty$. 
 
\begin{prop} \label{H7comp} The group $G=\lrang{\gamma_0^*,\gamma_1^*,\gamma_\infty^*}$ is 
$S_7$. Then, $\pmb\gamma^*$ represents the only element in
$\ni(S_7,\bfC^*)'$:  absolute equivalence classes with entries, in order, in the conjugacy class
$\bfC^*$. So, there is a unique cover $\bar\psi_7: X_7=X\to \prP^1_j$ in 
$\ni(S_7,\bfC^*)'$ (ramified over $\{0,1,\infty\}$). Restricting over $U_\infty$  gives 
$\psi_7:X^\infty \to U_\infty$ equivalent to      
$\sH_{\pm}\to U_\infty$ of \eql{propDiff}{propDiffb}: It parametrizes each of the two
absolute  reduced families of $\prP^1_z$ covers representing  degree 7 polynomials that
appear in a DP. 

The projective curve $X$ has  genus 0. It is not
a modular curve. The spaces $\sH_{\pm}$ are b-fine (but not fine; \S\ref{bfineMod})
moduli spaces over
$\bQ(\sqrt{-7})$. Their corresponding Hurwitz spaces are fine moduli spaces with a dense
set of
$\bQ(\sqrt{-7})$ points.  As a cover, however,  $\bar\psi_7$ has definition field  $\bQ$,
and $X$ has a dense set of $\bQ$ points.  
\end{prop}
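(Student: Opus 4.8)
The plan is to read everything off the explicit triple $\pmb\gamma^*=(\gamma_0^*,\gamma_1^*,\gamma_\infty^*)$ already computed, establishing the assertions in the order: $G=S_7$ and $g_X=0$; uniqueness of the Nielsen class element (the crux), which simultaneously yields the equivalence of $\sH_+$ and $\sH_-$ and the existence of a unique cover; and finally the arithmetic and moduli statements. For $G=S_7$ I would first check that $G$ is transitive on $\{1,\dots,7\}$ (read off the generators). Among the transitive subgroups of $S_7$ only $S_7$ itself contains an element of order $12$; since $\gamma_\infty^*=q_2^*=(1\,3\,4\,2)(5\,7\,6)$ has order $12$, we get $G=S_7$. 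The genus is then immediate from Riemann--Hurwitz with the index data $\ind\gamma_0^*+\ind\gamma_1^*+\ind\gamma_\infty^*=4+3+5=12=2(7+g_X-1)$, so $g_X=0$.

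The heart of the proof, and the step I expect to be the main obstacle, is showing $\#\ni(S_7,\bfC^*)'=1$. Because the three classes $\C_0,\C_1,\C_\infty$ have distinct cycle types $(3,3,1)$, $(2,2,2,1)$, $(4,3)$, no branch point can be permuted, $N_{S_7}(S_7,\bfC^*)=S_7$, and absolute equivalence is ordinary $S_7$-conjugacy. So I must count $S_7$-conjugacy classes of generating triples $(h_0,h_1,h_\infty)$ with $h_i\in\C_i$ and $h_0h_1h_\infty=1$. I would evaluate the class-algebra constant
\[
N=\frac{|\C_0|\,|\C_1|\,|\C_\infty|}{|S_7|}\sum_{\chi}\frac{\chi(c_0)\chi(c_1)\chi(c_\infty)}{\chi(1)}
\]
from the $S_7$ character table, subtract the triples whose entries land in a proper transitive subgroup, and divide by $|S_7|$ (legitimate since the centralizer of a generating triple is the trivial center of $S_7$); the claim is that this yields exactly $1$. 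Equivalently, the by-hand braid computation on the seven representatives $Y_1,\dots,Y_7$ via the $q_i^*$ exhibits the rigidity directly. This single count does double duty: since both $\sH_+\to\prP^1_j$ and $\sH_-\to\prP^1_j$ lie in $\ni(S_7,\bfC^*)'$ (their branch cycle data agree under the $\sD\leftrightarrow-\sD$ symmetry of \S\ref{diffSet}), rigidity forces $\sH_+\cong\sH_-\cong X$, giving \eql{propDiff}{propDiffc}. The unique element then corresponds, by the Riemann existence correspondence of \S\ref{standEquiv} with branch locus $\{0,1,\infty\}$, to a unique cover $\bar\psi_7:X\to\prP^1_j$; restricting over $U_\infty$ and invoking the reduced set-up of \S\ref{nc2} identifies $\psi_7$ with $\sH_\pm\to U_\infty$.

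For the arithmetic I would argue as follows. Every conjugacy class of $S_7$ is rational and $\{0,1,\infty\}$ is $\bQ$-rational, so the rigidity just proved (a one-element, rational, rationally-branched Nielsen class) produces a $\bQ$-model of $\bar\psi_7$; the two cusps over $\infty$ have distinct widths $4$ and $3$, hence are individually $\bQ$-rational, and a genus $0$ curve over $\bQ$ with a rational point is $\prP^1_\bQ$, so $X$ has a dense set of $\bQ$-points. The field $\bQ(\sqrt{-7})$ for $\sH_\pm$ as moduli comes from \S\ref{diffSet}: the multiplier group $M_7=\langle 2\rangle$ has order $3$ and index $2$ in $(\bZ/7)^*$, so its fixed field in $\bQ(\zeta_7)$ is the unique quadratic subfield $\bQ(\sqrt{-7})$, and $\sD,-\sD$ are interchanged there, making $\sH_+,\sH_-$ a $\bQ(\sqrt{-7})$-conjugate pair. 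These reduced absolute spaces are b-fine but not fine (\S\ref{bfineMod}) because reduced equivalence introduces the $\sQ''$-automorphisms of \S\ref{nc2}, whereas the inner reduced spaces are fine (the center of $S_7$ is trivial) and, being genus $0$ over $\bQ(\sqrt{-7})$ with a rational point, carry a dense set of $\bQ(\sqrt{-7})$-points. Finally, that $X$ is not a modular curve I would settle by comparing its ramification signature $\bigl((3,3,1),(2,2,2,1),(4,3)\bigr)$ over $(0,1,\infty)$ in degree $7$ against the finite list of genus $0$ modular $j$-line covers, none of which carries this cusp and elliptic-point data.
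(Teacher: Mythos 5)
Your peripheral reductions are mostly sound: the genus count, the rationality of the two cusps from the distinct widths $4$ and $3$ of the cycles of $\gamma_\infty^*$, and the derivation of $\bQ(\sqrt{-7})$ as the fixed field of $M_7=\lrang{2}$ in $\bQ(\zeta_7)$ all fit the paper's framework, and your order-$12$ route to $G=S_7$ works (the paper instead notes $(\gamma_\infty^*)^4$ is a $3$-cycle, quotes the classical primitive-plus-$3$-cycle theorem, and uses $\gamma_\infty^*\notin A_7$). But the heart of the proposition \wsp that $\ni(S_7,\bfC^*)'$ has exactly one element \wsp is never actually proved in your write-up. You display the Frobenius class-algebra formula and then say ``the claim is that this yields exactly $1$''; that is the statement to be proved, not an argument, and you never evaluate the character sum. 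Moreover, your plan to ``subtract the triples whose entries land in a proper transitive subgroup'' is the wrong correction term: non-generating triples can generate intransitive subgroups, and these occur a priori here, since $S_4\times S_3$ contains elements of all three cycle types $(4,3)$, $(3,3,1)$, $(2,2,2,1)$ compatibly with the $4{+}3$ partition (the $(4,3)$-element forces any intransitive triple into exactly such a subgroup), so these triples must be counted and removed as well. Your fallback \wsp ``equivalently, the by-hand braid computation on $Y_1,\dots,Y_7$ exhibits the rigidity directly'' \wsp conflates two levels: the braid action on the degree $7$ Davenport Nielsen class is what \emph{produces} $\pmb\gamma^*$ and shows $\sH_{\pm}$ is irreducible (transitivity of $\lrang{\gamma_0^*,\gamma_1^*,\gamma_\infty^*}$ on the seven representatives); it says nothing about uniqueness in $\ni(S_7,\bfC^*)'$, which is a rigidity statement about the $j$-line cover one level up. The paper's proof consists precisely of this missing step: normalize $\gamma_\infty^*$ into third position, work with its centralizer $U=\lrang{(1\,3\,4\,2),(5\,7\,6)}$, and run a case analysis on the fixed points $x_0,x_1$ of $g_0,g_1$ relative to the two $\gamma_\infty^*$-orbits, eliminating every configuration but one.

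Two secondary steps also misfire. For non-modularity you defer to ``the finite list of genus $0$ modular $j$-line covers,'' a list you neither specify nor justify; the paper's argument via Wohlfahrt's theorem is self-contained: if $X$ were modular its geometric monodromy group would be a quotient of $\PSL_2(\bZ/N)$ with $N=\ord(\gamma_\infty^*)=12$, and $\PSL_2(\bZ/12)=\PSL_2(3)\times \PSL_2(4)$ has no $S_7$ quotient. And your moduli logic is inverted: b-fineness holds \emph{because} $\sQ''$ acts faithfully on the absolute Nielsen classes (its nontrivial elements move the position of the $7$-cycle class), while fineness fails because $\gamma_0^*$ and $\gamma_1^*$ have fixed points \wsp not ``because reduced equivalence introduces the $\sQ''$-automorphisms''; had $\sQ''$ failed to act faithfully you would lose b-fineness itself. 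Relatedly, the paper's ``corresponding Hurwitz spaces'' with fine moduli and dense $\bQ(\sqrt{-7})$ points are the nonreduced absolute spaces (fine since no element of $S_7$ centralizes the monodromy group), not the ``inner reduced spaces'' you cite as fine.
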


\begin{proof} Since the group $G$ is transitive of degree 7, it is automatically primitive. 
Further, $(\gamma_\infty^*)^4$ is a 3-cycle. It is well-known that a primitive subgroup
of $S_n$ containing a 3-cycle is either $A_n$ or $S_n$. In, however, our case
$\gamma_\infty^*\not\in A_7$, so
$G=S_7$. 

We outline why   $\ni(S_7,\bfC^*)'$ has but one element. The centralizer of $\gamma^*_\infty$ is
$U=\lrang{(1\,3\,4\,2),(5\,7\,6)}$. Modulo absolute equivalence, any 
$(g_0,g_1,g_\infty)\in \ni(S_7,\bfC^*)'$ has $\gamma_\infty^*$ in the 3rd position. Let
$F=\{2,5\}$ and let $x_i$ be the fixed element of $g_i$, $i=0,1$. Elements of $F$ represent the
two orbits
$O_1,O_2$ of
$\gamma_\infty^*$ ($2\in O_1$ and $5\in O_2$). Conjugating by elements of $U$ gives four 
possibilities:
\begin{edesc} \label{onetwo} \item \label{onetwoa} (*) $x_0=2$ and $x_1\in O_1\setminus
\{2\}$; or (**)
$x_0=5$ and
$x_1\in O_2\setminus \{5\}$; or   
\item \label{onetwob} (*) $x_0=5$ and $x_1=2$; or (**) $x_0=2$ and $x_1=5$. \end{edesc}
 
We show the only possibility is \eql{onetwo}{onetwob} (**), and for that,  there
is but one element. First we eliminate \eql{onetwo}{onetwoa} (*) and (**). For 
\eql{onetwo}{onetwoa} (**), then $x_0=5$ and $x_1=7$ or 6. The former forces (up to
conjugation by $U$) 
$$g_0=(7\,6\,1)\cdots,g_1=(5\,6)\cdots.$$ Then, $g_1\gamma_\infty^*$ fixes 6, 
contradicting $(6)g_0=1$. Also, $x_1=6$ fails. Consider
\eql{onetwo}{onetwoa} (*), so $x_1=1$ or 3 ($(2\,4)$ appears in $g_0$ automatically). Symmetry
between the cases allows showing only $x_1=1$. This forces 
$(1\,4\,3)$ in $g_0$ and $\lrang{\gamma_\infty^*,g_0}$ is not transitive. 

Now we eliminate \eql{onetwo}{onetwob} (*). Suppose $x_0=5$, $x_1=2$. 
This forces either  $$g_0=(6\,1\,2)\cdots,\ g_1=(5\,6)(1\,7)\cdots \text{ or }
g_0=(6\,x\,?)(1\,2\,y),\ g_1=(5\,6)(x\,7)(y\,4)\cdots.
$$ In the first,  $4\mapsto 2 \mapsto 6 \to 5$ in the product, so it doesn't
work.  By inspection, no value works for $y$ in the latter. 

Conclude $x_0=2$, $x_1=5$. Previous analysis now produces but one 
possible element in $\ni(S_7,\bfC^*)'$. Applying Riemann-Hurwitz, using the index
contributions of $g_0^*,g_1^*,g_\infty^*$ in order,  the
genus of the cover as $g_7$ satisfies $2(7+g_7-1)=3+4+5$. So, $g_7=0$. 

That $X$  is not a modular curve follows
from WohlFahrt's's Theorem \cite{Wohlfahrt}. If it were, then its geometric monodromy group
would be a quotient of $\PSL_2(\bZ/N)$: $N$ is the least common multiple of the cusp
widths. So, $\ord(\gamma_\infty^*)=N=12$. Since, however,
$\PSL_2(\bZ/12)=\PSL_2(3)\times
\PSL_2(4)$, it does not have $S_7$ as a quotient.  

We account for the b-fine moduli
property. This is equivalent to $\sQ''$ acting faithfully on absolute Nielsen
classes \cite[Prop.~4.7]{BFr}. This is so from \eqref{redLoc}, 
nontrivial elements of
$\sQ''$ move the conjugacy class of the 7-cycle. \cite[Prop.~4.7]{BFr}
also shows it is a fine reduced moduli space if and only if  
$\gamma_0^*$ and $\gamma_1^*$ have no fixed points. Here   both have 
fixed points. So, the spaces $\sH_{\pm}$ are not fine moduli spaces. \end{proof}

\subsection{b-fine moduli property} \label{bfineMod} We explain \cite[\S4.3.1]{BFr} for our 
Davenport polynomial spaces. For the arithmetician, $\sP$ having {\sl fine\/} moduli over a field
$K$ has this  effect. For 
$\bp\in \sP$ corresponding to a specific algebraic object up to isomorphism, you have a 
representing  object with equations over $K(\bp)$ with $K$. (We 
tacitly assume  $\sP$ is quasiprojective \wsp our $\sH$ and $\sH^{\rd}$ spaces are
actually affine \wsp to give meaning to the field generated by the coordinates of a point.) 

\subsubsection{The meaning of b-fine} \label{bfine} The b in b-fine stands for {\sl birational}. It
means that if 
$\bp\in
\sH^{\rd}$ is not over
$j=0$ or 1, the  interpretation above for $\bp$ as a fine moduli point applies. It
{\sl may\/} apply if $j=0$ or 1, though we cannot guarantee it. 

If $\sP$ is only a moduli space (not fine), then $\bp\in \sP$ may have no representing
object over 
$K(\bp)$ (and certainly can't have one over a proper subfield of $K(\bp)$). Still,
$G_{K(\bp)}$ stabilizes the complete set of objects over  $\overline {K(\bp)}$ representing
$\bp$. 

For any Nielsen class of four branch point covers, suppose the  absolute (not reduced) space
$\sH$ has fine moduli. The
condition for that is no element of
$S_n$ centralizes
$G$. That holds automatically for any primitive non-cyclic group $G\le S_n$ (so in our
cases; all references on rigidity in any form have this). For $\bp$, denote the set of
points described as the image of  $\bp$ in
$U_4=((\prP_z^1)^4\setminus
\Delta_4)/S_4$ by $\bz_\bp$. Assume $K$ is the field of rationality of the conjugacy
classes. Conclude: There is a representative cover
$\phi_\bp: X_\bp\to
\prP^1_z$ branched over $\bz_\bp$ and having definition field $K(\bp)$. 

\subsubsection{Producing covers from the b-fine moduli property} Use the notation $\sH^\abs$ for
nonreduced space representing points of the Nielsen class. Then $\sH_{\pm}$ identifies with
$\sH^\abs_{\pm}/\PGL_2(\bC)$. Consider our degree 7 Davenport pair problem and their reduced
spaces.  The Nielsen classes are rational over 
$K=\bQ(\sqrt{-7})$ (Prop.~\ref{H7comp}). Any 
$\bp^\rd\in
\sH_{\pm}$ represents a  cover $\phi_{\bp^\rd}: X_{\bp^\rd}\to Y_{\bp^\rd}$ with
$Y_{\bp^\rd}$ a conic in
$\prP^2$ \cite[Prop.~4.7]{BFr} and $\phi_{\bp^\rd}$  degree 7. Often, even for  b-fine moduli and
$\bp^\rd$ not over 0 or 1, there may be no $\bp\in \sH^\abs$ lying over
$\bp^\rd$ with
$K(\bp)=K(\bp^\rd)$. Such a $\bp$ would give   
 $\psi_\bp: X_\bp \to \prP^1_z$ over $K(\bp^\rd)$ representing $\phi_{\bp^\rd}$. 

Yet, in our special case,  $X_{\bp^\rd}$ has a unique point totally ramified over
$Y_{\bp^\rd}$. Its image in $Y_{\bp^\rd}$ is  a $K(\bp^\rd)$ rational point. So
$Y_{\bp^\rd}$ is isomorphic to $\prP^1_z$ over $K(\bp^\rd)$. 

Up to an affine change of variable over
$K$, there is a copy of
$\prP^1=X$ over
$K$ that parametrizes degree 7 Davenport  pairs (over any nontrivial extension $L/K$). 

\section{$j$-line covers for polynomial $\PGL_3(\bZ/3)$ monodromy} \label{deg13j-line} We start by
listing the 3-tuples
$(g_1,g_2,g_3)$ for $X_i$ in Table
\eqref{13dav} that represent an absolute Nielsen class representative by tacking
$g_\infty=(1\,\cdots\,13)^{-1}$ on the end \cite[\S8]{Fr-Schconf}. The rubric proceeds just as in
the degree 7 case. 
\subsection{Degree 13 Davenport branch cycles $(g_1,g_2,g_3,g_\infty)$} 
The following elements are involutions fixing the 
hyperplane corresponding to the difference set. Each  fixes 
one of the 9 points off the hyperplane. 
Compute directly possibilities for
$g_1, g_2, g_3$ since each is a conjugate from this list:
$$\begin{array}{lll}
(7\, 8) (5\, 11)(6\, 12)(9\, 13);&\ (3\, 11)(7\, 13)(6\, 8)(9\, 12);&\ 
(3\, 12)(5\,8)(7\, 9)(11\, 13);\\ (5\, 13)(6\, 9)(11\, 12)(3\, 8);&\ 
(5\, 6)(3\, 7)(8\, 11)(12\, 13);&\ (6\, 7)(8\, 11)(5\, 12)(3\, 13);\\
(3\, 5)(7\, 12)(6\, 13)(8\, 9);&\  (3\, 6)(5\, 9)(7\, 11)(8\, 13);&\ 
(5\, 7)(6\, 11)(8\, 13)(3\, 9).\end{array}$$ 
Here is the table for applying the
action of
$q_1,q_2,q_3$. 
 
\begin{equation}\begin{array}{rlll} \label{13dav}
X_1: \!\!\!\; & \!\!\!\!\!(6\, 7)(8\, 11)(5\, 12)(3\, 13),\ & \!\!\!\!\! (2\, 3)(13\, 4)(6\,
8)(9\, 10),\  & \!\!\!\!\! (1\, 2)(13\, 5)(6\, 12)(9\, 11) \\
X_2:  \!\!\!\; & \!\!\!\!\!(6\, 7)(8\, 11)(5\, 12)(3\, 13),\ & \!\!\!\!\!  (1\, 2)(13\, 5)(6\,
12)(9\, 11),\  & \!\!\!\!\!  (1\, 3)(5\, 4)(12\, 8)(11\, 10)\\
 X_3:  \!\!\! \; & \!\!\!\!\! (3\, 5)(7\, 12)(6\, 13)(8\, 9),\ & \!\!\!\!\!  (1\, 6)(2\, 3)(13\,
7)(12\, 10),\  & \!\!\!\!\!  (8\, 10)(12\, 11)(6\, 2)(5\, 4)\\
X_4:  \!\!\! \; & \!\!\!\!\!  (3\, 5)(7\, 12)(6\, 13)(8\, 9),\ & \!\!\!\!\!  (8\, 10)(12\,
11)(6\, 2)(5\, 4),\  & \!\!\!\!\! (1\, 2)(6\, 3)(13\, 7)(11\, 8) \\
X_5:  \!\!\! \; & \!\!\!\!\!  (5\, 6)(3\, 7)(9\, 11)(12\, 13),\ & \!\!\!\!\!  (1\, 3)(4\, 5)(8\,
12)(11\, 10),\  & \!\!\!\!\! (2\, 3)(7\, 4)(1\, 8)(12\, 9) \\
X_6:  \!\!\! \; & \!\!\!\!\!  (5\, 6)(3\, 7)(9\, 11)(12\, 13),\ & \!\!\!\!\! (9\, 12)(2\, 3)(7\,
4)(1\, 8),\  & \!\!\!\!\! (8\, 2)(7\, 5)(1\, 9)(11\, 10) \\
X_7: \!\!\! \; & \!\!\!\!\! (5\, 6)(3\, 7)(9\, 11)(12\, 13),\ & \!\!\!\!\! (1\, 9)(2\, 8)(7\,
5)(10\, 11),\  & \!\!\!\!\! (4\, 5)(3\, 8)(9\, 2)(12\, 1)\\
X_8:  \!\!\! \; & \!\!\!\!\! (5\, 6)(3\, 7)(9\, 11)(12\, 13),\ & \!\!\!\!\! (4\, 5)(3\, 8)(9\,
2)(12\, 1),\  & \!\!\!\!\! (12\, 2)(9\, 3)(7\, 4)(11\, 10)\\
X_9: \!\!\! \; & \!\!\!\!\!(8\, 9)(7\, 12)(13\, 6)(3\, 5),\ & \!\!\!\!\!  (1\, 2)(6\, 3)(13\,
7)(11\, 8),\  & \!\!\!\!\! (1\, 3)(5\, 4)(12\, 8)(11\, 10)\\
X_{10}: \!\!\! \; & \!\!\!\!\!  (6\, 7)(8\, 11)(5\, 12)(3\, 13),\ & \!\!\!\!\!  (1\, 3)(4\,
5)(12\, 8)(11\, 10),\  & \!\!\!\!\! (6\, 8)(10\, 9)(4\, 13)(3\, 2) \\
X_{11}: \!\!\! \; & \!\!\!\!\!  (7\, 3)(5\, 6)(12\, 13)(9\, 11),\ & \!\!\!\!\!  (1\, 2)(7\, 5)(3\,
12)(8\, 9),\  & \!\!\!\!\! (1\, 3)(5\, 4)(12\, 8)(11\, 10) \\
X_{12}:  \!\!\! \; & \!\!\!\!\!  (5\, 6)(3\, 7)(9\, 11)(12\, 13),\ & \!\!\!\!\!  (10\, 11)(2\,
12)(3\, 9)(7\, 4),\  & \!\!\!\!\!  (1\, 2)(3\, 12)(7\, 5)(9\, 8) \\
X_{13}: \!\!\! \; & \!\!\!\!\!  (8\, 9)(6\, 13)(7\, 12)(3\, 5),\ & \!\!\!\!\!  (1\, 3)(5\, 4)(12\,
8)(11\, 10),\  & \!\!\!\!\! (2\, 3)(1\, 6)(7\, 13)(10\, 12). \end{array}\end{equation}

Here are the $j$-line branch cycle descriptions. From  action \eqref{twisting} on Table
\eqref{13dav}
\begin{equation} \label{Dav13-cycles} 
\begin{array}{rl}\gamma_0^*=q_1q_2=&(1\,5\,3)(6\,9\,13)(2\,8\,11)(4\,7\,10)
\\
\gamma_1^*=q_1q_2q_3=&(1\,4)(2\,5)(3\,6)(7\,9)(8\,10)(11\,12),\\
\gamma_\infty^*=q_2=&(1\,10\,2)(3\,13\,9\,4)(5\,11\,12\,8\,7\,6).\end{array}\end{equation}

Again, you figure $\gamma_1^*$ from the product one condition. In analogy to
Prop.~\ref{H7comp} we prove properties \eqref{propDiff13} for degree 13 Davenport pairs. Denote 
$\lrang{\gamma_0^*,\gamma_1^*,\gamma_\infty^*}$ by $G$. 

\begin{prop} \label{H13comp} Then,  $G=A_{13}$. With $\bfC^*=(\C_0,\C_1,\C_\infty)$ the conjugacy
classes, respectively, of
$\pmb \gamma^*$, $\ni(A_{13},\bfC^*)'$ (absolute equivalence classes with entries in order in
$\bfC^*$) has one element. So, there is
a unique cover $\bar\psi_{13}: X=X_{13}\to \prP^1_j$ representing it (ramified over
$\{0,1,\infty\}$).  Restrict over
$U_\infty$ for $\psi_{13}:X^\infty \to U_\infty$ equivalent to each       
$\sH_{j}\to U_\infty$, $j=0,1,2,3$. 

The projective curve $X$ has  genus 0 and is not 
a modular curve. The spaces $\sH_{j}$ are b-fine (not fine)
moduli spaces over
$K_{13}$. Their corresponding Hurwitz spaces are fine moduli spaces with a dense
set of
$K_{13}$ points.  As a cover, however,  $\bar\psi_{13}$ has definition field  $\bQ$,
and $X$ has a dense set of $\bQ$ points. 
 \end{prop}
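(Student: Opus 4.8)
The plan is to run the template of Prop.~\ref{H7comp} almost verbatim, adjusting only the degree-$13$ group theory. First I would identify $G=\lrang{\gamma_0^*,\gamma_1^*,\gamma_\infty^*}$. Since $13$ is prime and $G$ is transitive, $G$ is primitive, and the primitive groups of degree $13$ are well known: either $G$ is solvable and contained in the affine group $13\!:\!12$, or $G$ is one of $\PGL_3(\bZ/3)=\PSL_3(\bZ/3)$, $A_{13}$, $S_{13}$. The cycle type $3+4+6$ of $\gamma_\infty^*$ kills the affine (and every solvable) case at once: a nonidentity element of $13\!:\!k$ is either a $13$-cycle or a single cycle of length $k$ fixing one point, never of type $3+4+6$. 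It also excludes $\PSL_3(\bZ/3)$, whose element orders are $\{1,2,3,4,6,8,13\}$, so that $\gamma_\infty^*$ of order $12$ cannot lie in it. Hence $G\in\{A_{13},S_{13}\}$, and since $\gamma_0^*$ (four $3$-cycles), $\gamma_1^*$ (six $2$-cycles) and $\gamma_\infty^*$ are all even, $G=A_{13}$. (Alternatively one can hunt a $3$-cycle among small products of powers of $\pmb\gamma^*$ and invoke Jordan's theorem, but the prime-degree route avoids that search.)

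Next I would show $\ni(A_{13},\bfC^*)'$ is a single class, exactly as in the degree $7$ argument but with heavier bookkeeping. Normalize $g_\infty=\gamma_\infty^*$; its $S_{13}$-centralizer is $C_3\times C_4\times C_6$ of order $72$, and absolute equivalence lets me conjugate by it. I then run over the admissible $g_1$ in the involution class $\C_1$, set $g_0=g_\infty^{-1}g_1^{-1}$, and demand $g_0\in\C_0$ (four $3$-cycles, one fixed point) with $\lrang{g_0,g_\infty}=A_{13}$. Tracking the fixed points of $g_0,g_1$ against the three $\gamma_\infty^*$-orbits of lengths $3,4,6$ should cut the configurations down to the single surviving one realized by $\pmb\gamma^*$. \emph{This uniqueness is the step I expect to be the main obstacle}: unlike degree $7$ there are three $g_\infty$-orbits of distinct lengths and a larger involution class, so the elimination of spurious triples (those giving an intransitive group, a proper primitive subgroup, or the wrong cycle type for $g_0$) is where the real work lies. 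I would organize it by first fixing the $g_\infty$-orbit containing the fixed point of $g_1$, then that of $g_0$, and discarding the rest case by case. The identification of $\psi_{13}$ restricted over $U_\infty$ with each $\sH_j\to U_\infty$ is then automatic, since the $\gamma_i^*$ were produced as the reduced Hurwitz action on Table \eqref{13dav} via \cite[Prop.~4.4]{BFr}.

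Genus and non-modularity are routine. Riemann--Hurwitz with index contributions $\ind(\gamma_0^*)=8$, $\ind(\gamma_1^*)=6$, $\ind(\gamma_\infty^*)=10$ gives $2(13+g-1)=24$, so $g=0$, with $\bar\psi_{13}$ of degree $13$ ramified only over $\{0,1,\infty\}$. For non-modularity I reuse Wohlfahrt's Theorem \cite{Wohlfahrt}: the cusp widths are the cycle lengths $3,4,6$ of $\gamma_\infty^*$, so the Wohlfahrt level is $N=12$; a modular $X$ would force $A_{13}$ to be a quotient of $\PSL_2(\bZ/12)\cong\PSL_2(\bZ/3)\times\PSL_2(\bZ/4)$, a group of order $720$, which is impossible.

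Finally I would assemble the moduli statements from \cite[Prop.~4.7]{BFr} as in Prop.~\ref{H7comp}. The space is b-fine because $\sQ''$ acts faithfully on the absolute Nielsen class, every nontrivial element moving the class of the $13$-cycle; it is not fine because both $\gamma_0^*$ and $\gamma_1^*$ have a fixed point (points $12$ and $13$). The classes $\C_0,\C_1,\C_\infty$ are each $\bQ$-rational: coprime powers preserve their cycle types, and none splits in $A_{13}$ (each has a repeated or even part), so the rigid triple $\pmb\gamma^*$ has field of moduli $\bQ$, whence rigidity makes $\bar\psi_{13}$ a $\bQ$-cover. Since the three cusps over $j=\infty$ have distinct widths $3,4,6$, each is $\bQ$-rational, giving a $\bQ$-point and thus $X\cong\prP^1$ over $\bQ$ with dense $\bQ$-points. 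The $K_{13}$ structure of each $\sH_i$ comes exactly as for $n=7$: the $13$-cycle power class $\C_{(\alpha)^{6^i}}$ is defined over the fixed field $K_{13}$ of $M_{13}=\lrang 3$, and the unique totally ramified point of the associated degree $13$ cover supplies a $K_{13}$-rational point identifying the reduced space with $\prP^1$ over $K_{13}$, with dense $K_{13}$-points.
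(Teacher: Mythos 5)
Your outline follows the paper's proof almost step for step, with a few harmless or even advantageous variants. The paper identifies $G$ from Carmichael's table of primitive degree 13 groups and rules out $\PSL_3(\bZ/3)$ and $\PGL_3(\bZ/3)$ by noting $(\gamma_\infty^*)^6$ fixes nine points, more than any element of those groups can fix on $\prP^2(\bF_3)$; your exclusions via the element order 12 (absent from $\PSL_3(\bZ/3)$) and via cycle type for the solvable cases are equally valid, and your parenthetical $\PGL_3(\bZ/3)=\PSL_3(\bZ/3)$ is correct since $\gcd(3,3-1)=1$. The genus count ($8+6+10=24$, so $g_{13}=0$), the Wohlfahrt argument (the paper's $n=13$ text is terser, saying only that $A_{13}$ is not a suitable quotient, but your transplant of the $n=7$ argument is what it intends), the b-fine/not-fine conclusions with the correct fixed points 12 and 13 of $\gamma_0^*,\gamma_1^*$, and the rationality/rigidity bookkeeping for the $\bQ$ versus $K_{13}$ structures all match the paper or supply standard detail it leaves implicit. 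Two slips are inessential: $\PSL_2(\bZ/12)\cong\PSL_2(\bZ/3)\times\PSL_2(\bZ/4)$ has order $288$, not $720$; and a non-$13$-cycle in $13\!:\!k$ has one fixed point and $12/d$ cycles of a common length $d>1$, not a single cycle --- neither affects the exclusions.

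The genuine gap is the one you flagged yourself: the claim that $\ni(A_{13},\bfC^*)'$ has exactly one element is asserted (``should cut the configurations down to the single surviving one'') but never proved, and it is the load-bearing wall of the proposition --- the uniqueness of $\bar\psi_{13}$, its descent to $\bQ$, and the identification of the restriction over $U_\infty$ with each $\sH_j$ all rest on it. In the paper this occupies the bulk of the proof: after conjugating $\gamma_\infty^*$ to $(1\,2\,3\,4)(5\,6\,7\,8\,9\,10)(11\,12\,13)$ with centralizer $U$ of order 72, it shows that neither $g_0$ nor $g_1$ fixes a point of the length 3 orbit $O_3$; that the fixed points $x_0,x_1$ lie one in $O_1$ and one in $O_2$ (eliminating, along the way, the a priori cases $x_0,x_1$ in a common orbit); that $x_0=1,x_1=5$ is impossible, leaving $x_0=5$, $x_1=1$; and then a further page of forced choices pins down the single triple with $g_0=(2\,1\,8)(6\,10\,11)(12\,9\,4)(13\,3\,7)$ and $g_1=(10\,5)(8\,4)(11\,9)(3\,12)(2\,7)(6\,13)$. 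Your organizational plan --- fix the $\gamma_\infty^*$-orbit of the fixed point of $g_1$, then of $g_0$, and discard cases using product-one and transitivity --- is exactly the paper's, so nothing in your approach would fail; but as submitted, the central count of the proposition is a plan rather than an argument, and those roughly two pages of eliminations still have to be carried out.
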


\subsection{Proof of Prop.~\ref{H13comp}} First compute the genus $g_{13}$ of the
curve in the cover presented by
$\bar
\psi_{13}$ to be 0, from  
$$2(13+g_{13}-1)=\ind(\gamma_0^*)+\ind(\gamma_1^*)+\ind(\gamma_\infty^*)=4\cdot
2+6+(2+3+5)=24.$$ 

Now we show why the geometric monodromy of the
cover $\bar\psi_{13}$ is $A_{13}$. There are nine primitive groups of degree 13
\cite[p.~165]{carmichael}. Three affine groups $\bZ/p\xs U$; each   $U\le
(\bZ/13)^*$, with $U=\{1\}$, or having order 2 or order 3. Then, there are six other 
groups
$S_{13},A_{13}$ and
$\PGL_{3}(\bZ/3),\PSL_3(\bZ/3)$ with each of the last two acting on points and hyperplanes. The
generators
$\gamma_0^*$ and
$\gamma_1^*$ are in
$A_{13}$.  It would be cute if the monodromy group were $\PGL_{3}(\bZ/3)$ (the same as of the
covers represented by its points).  We see, however, $(\gamma_\infty^*)^6$ fixes way more
than half the integers in $\{1,\dots,13\}$. This is contrary to the properties of
$\PSL_{3}(\bZ/3),\PGL_3(\bZ/3)$. 

Now consider $\ni(A_{13},\bfC^*)'$.  Like $n=7$, assume triples of form
$(g_0,g_1,\gamma_\infty^*)$.  To simplify,
conjugate by an $h\in S_{13}$ to change $\gamma_\infty^*$ to 
$(1\,2\,3\,4)(5\,6\,7\,8\,9\,10)(11\,12\,13)$ (keep its name the same). Its centralizer 
is   
$U=\lrang{(1\,2\,3\,4),(5\,6\,7\,8\,9\,10), (11\,12\,13\,14)}$.  Take 
$F=\{1,5,11\}$ as representatives, in order, of the three orbits $O_1,O_2,O_3$ of
$\gamma_\infty^*$ on $\{1,\dots, 13\}$. Suppose $g_i$ fixes $x_i\in \{1\,\dots,13\}$,
$i=0,1$. If $x_0$ and $x_1$ are in different $O_k\,$s,   
conjugate by 
$U$, and use transitivity of
$\lrang{g_0,g_1}$, to assume $x_0$ is one element of $F$ and $x_1$ another. We 
show the following hold. 

\begin{edesc} \label{fix} \item \label{fix1} Neither $g_0$ nor $g_1$ fixes an element of
$O_3$. 
\item \label{fix2} The fixed point of $g_0$ is in $O_1$ or $O_2$, and of $g_1$ in the
other. 
\item \label{fix3} With no loss we may assume either $x_0=5$ and $x_1=1$, or $x_0=1$ and
$x_1=5$, and the latter is not possible. 
\end{edesc}

\subsubsection{Proof of \eql{fix}{fix1}} Suppose $x_0\in O_3$; with no loss conjugating by
$U$ take it to be 11. Then, $g_0=(12\,13\,y)\cdots $ and $g_1=(11\,13)(y\,12)\cdots$
(using  product-one in the form $\gamma_\infty^* g_0g_1=1$). Then, however, $g_0g_1$
fixes $y$: a contradiction, since $\gamma_\infty^*$ fixes nothing. Now suppose $x_1\in
O_3$ ($x_1=11$). Then,
$g_0=(12\,11\,y)\cdots$ and
$g_1=(y\,13)\cdots$. Transitivity of $\lrang{\gamma_\infty^*,g_0}$ prevents $y=13$.
Conjugating by $U$ allows taking $y=1$ or $y=5$. If $y=1$, then you find 
$g_0=(12\,11\,1)(2\,13\,4)\cdots$ and $g_1=(1\,13)(4\,12)(2\,3)\cdots$. Now, 
$\lrang{\gamma_\infty^*,g_1}$ leaves $O_1\cup O_3$ stable, so is not transitive. 

\subsubsection{Proof of \eql{fix}{fix2}, case $x_0,x_1\in O_1$} With no loss $x_0=1$
and
$g_1=(1\,4)\cdots $. This forces $g_0=(2\,4\,z)\cdots$, so under our hypothesis,
$(3)g_0=3$. This forces $g_1=(1\,4)(2\,3)\cdots$ and $\lrang{\gamma_\infty^*,g_1}$ is
stable on $O_1$. 

\subsubsection{Proof of \eql{fix}{fix2}, case $x_0,x_1\in O_2$} With no
loss $$x_0=5\text{ and }g_1=(6\,10\,z)\cdots,g_1=(10\,5)(z\,9)\cdots.$$ We separately show
$x_1\in
\{6,7,8\}$ are impossible, and by symmetry, $x_1\in O_2$ is impossible. If $x_1=6$, then
$z=7$, and $g_1=(6\,10\,7)(8\,9\,w)\cdots,g_1=(10\,5)(7\,9)(w\,8)\cdots$. The
contradiction is that $g_0g_1$ fixes $w$. 

If $x_1=7$, then $g_0=(6\,10\,z)(8\,7\,w)\cdots,g_1=(10\,5)(z\,9)(w\,6)\cdots$. So neither
$z$ nor $w$ can be 8 or 9: With no loss $w\in \{1,11\}$. If $w=11$, then with no loss
$z\in
\{12,1\}$.  With $z=1$, $(4)g_0g_1=11$, a contradiction. If $z=12$, 
$$g_0=(6\,10\,12)(8\,7\,11)(13\,9\,u)\cdots,g_1=(10\,5)(12\,9)(11\,6)(8\,13)\cdots.$$
Check: $u$ must be 13, a contradiction. 

Finally, $w=1$ forces 
$g_0=(6\,10\,2)(8\,7\,1)(3\,9\,?)\cdots,g_1=(10\,5)(2\,9)(1\,6)(8\,4)\cdots$.  
So, $(9)g_0g_1=8$ forces $?=4$, and that forces $g_1$ to fix 3, a contradiction to $x_1=7$. 

That leaves $x_1=8$ and 
$g_0=(6\,10\,z)(9\,8\,w)\cdots,g_1=(10\,5)(z\,9)(w\,7)\cdots$. Conjugate by $U$ to 
assume $w\in \{1,11\}$. The case $w=1$ forces 
$$g_0=(6\,10\,4)(9\,8\,1)(2\,7\,3)\cdots,g_1=(10\,5)(4\,9)(1\,7)(6\,3)\cdots.$$ Conclude: 
$\lrang{g_0,\gamma_\infty^*}$ stabilizes $O_1\cup O_2$. If $w=11$, 
$g_0=(6\,10\,z)(9\,8\,11)\cdots,g_1=(10\,5)(z\,9)(11\,7)\cdots$. In turn, this 
forces
$z=10$ and 10 appears twice in $g_1$.  

\subsubsection{Proof of \eql{fix}{fix3}} From \eql{fix}{fix1} and \eql{fix}{fix2},
conjugate by $U$ for the first part of \eql{fix}{fix3}. We must show
$x_0=1, x_1=5$ is false. If this does hold, then 
$$g_0=(6\,5\,y)(2\,4\,z)(3\,w\,?)\cdots,g_1=(1\,4)(y\,10)(2\,w)(3\,z)\cdots.$$  
Note that $y\ne 2$, and 2 and 3 appear in distinct cycles in $g_1$
 using $\lrang{\gamma_\infty^*,g_1}$ is transitive. Here is the approach for the rest: Try each
case where $y,z,w$ are in $ O_3$. Whichever of $\{y,z,w\}$ we try, with no loss take this to be 11. 

Suppose $y=11$. Then, $g_1\gamma_\infty^*$ fixes 11, contrary to $g_0$ not fixing
it. Now suppose $z=11$. This forces $w=13$ and $\gamma_\infty^*g_0$ fixes 12, though
$g_1$ does not. Finally, suppose $w=11$, and get an analogous contradiction to that for
$y=11$. With $y,z,w\in O_2$ we have $(3\,w\,11)$ a 3-cycle of $g_0$. This forces $z=13$,
a contradiction. 

\subsubsection{Listing the cases with $x_0=5$, $x_1=1$} With these hypotheses: 
$$g_0=(2\,1\,y)(6\,10\,w)\cdots,g_1=(10\,5)(y\,4)(w\,9)\cdots.$$ We check that 
$y,w\not\in O_1$: $y=3$ (or $w=3$) to get simple
contradictions. Example: $w=3$ forces $(6\,2)$ in $g_1$; forcing $y=7$, and
$\lrang{g_0,\gamma_\infty^*}$ is not transitive. 
Now check that
$y\not \in O_3$, but
$w\in O_3$. Our normalization for being in
$O_3$ allows  $y=11$: 
$$g_0=(2\,1\,11)(6\,10\,w)(12\,4\,u)(3\,13\,v),g_1=(10\,5)(11\,4)(w\,9)(2\,13)\cdots.$$
So, $w$ is 7 or 8. The first forces $\gamma_\infty^*g_0$ to fix 6, the second forces
$g_1\gamma_\infty$ to fix 9.  

Now consider $y,w\in O_2$. If $y=7$ and $w=8$, then $g_1\gamma_\infty^* $ fixes 9. If 
$y=8$ and $w=7$, then $ \gamma_\infty^* g_0$ fixes 6.  We're almost done: Try $y\in O_2$, and
$w=11$. Then
$y=7$ or 8. Try
$y=7$:
$g_0=(2\,1\,7)(6\,10\,11)(3\,z\,?)\cdots,g_1=(10\,5)(7\,4)(11\,9)(z\,2)\cdots$. This
forces $z=6$, putting $z$ in $g_0$ twice. So, $y=8$: You find that from this start, 
$$g_0=(2\,1\,8)(6\,10\,11)(12\,9\,4)(13\,3\,7),g_1=(10\,5)(8\,4)(11\,9)(3\,12)(2\,7)(6\,13)$$
is forced, concluding that there is one element in $\ni(A_{13},\bfC^*)'$.  

It is easy that $A_{13}$ has no $\PSL_2(\bZ)$ quotient. The b-fine moduli
is  from $\sQ''$ acting faithfully on the location of the 13-cycle conjugacy class 
 (as  with $n=7$). That $X^\infty$ does not have fine moduli 
follows from  $\gamma_0^*$ and $\gamma_1^*$ having fixed points.

\newcommand{\Q}[1]{\text{Q}^{{#1}}}
\newcommand{\psigma}{{\pmb \sigma}}
\newcommand{\sN}{{\tsp N}}

\section{Projective systems of Nielsen classes} Let $F_2=\lrang{x_1,x_2}$ be the free
group on two generators. Consider two simple cases for a group $H$ acting
faithfully on
$F_2$,  $H=\bZ/2$ ($n=2$) and $H=\bZ/3$ ($n=3$). 
\begin{edesc} \label{Hnact} \item \label{Hnact2} The generator of $H_2$ acts as
$x_i\mapsto x_i^{-1}$,
$i=1,2$.
\item \label{Hnact3} The generator $1\in \bZ/3=H_3$ acts as $x_1\mapsto x_2^{-1}$ and
$x_2\mapsto x_1x_2^{-1}$.
\end{edesc}

We explain why these  cases contrast extremely in achievable Nielsen classes.  Let
$\bfC_{2^4}$ be four repetitions of the nontrivial conjugacy class of $H_2$. Similarly, 
$\bfC_{\pm 3^2}$ is two repetitions of each nontrivial $H_3$  class. Refer to $\bfC$ as 
$p'$ conjugacy classes if a prime $p$ divides the orders of no elements in $\bfC$. Example:
$\bfC_{\pm 3^2}$ are $2'$ classes. 

\subsection{Projective sequences of Nielsen classes} \label{projNCs} Assume
$G^*\to G$ is a group cover, with kernel a
$p$ group. Then $p'$ conjugacy classes lift uniquely to $G^*$ \cite[Part III]{FrMT}. This 
allows viewing $\bfC$ as conjugacy classes in 
appropriate covering groups. 

Let
$P$ be any set of primes. 
Denote the collection of finite $p$ group quotients of $F_2$, with
$p\not\in P$, by 
$\Q {F_2}(P)$. Denote those stable under  $H$ by $\Q {F_2}(P,H)$.
Consider inner Nielsen classes with some fixed $\bfC$, $P$
containing all primes dividing orders of elements in $\bfC$, and groups running over a
collection from $\Q {F_2}(P,H)$: 
$$\sN_H= \{\ni(G,\bfC)^\inn\}_{\{G=U\xs H\mid  U\in
\Q {F_2}(P,H)\}}.$$  Only  $P=P_n=\{n\}$ for
$n=2,3$ appear below, though we state a general problem. 

Suppose for some $p$, $\sG_{p,I}=\{U_i\}_{i\in I}$ is a projective subsequence of 
(distinct) $p$ groups from $\Q {F_2}(P,H)$. Form a limit group  $G_{p,I}=\lim_{\infty
\leftarrow i} U_i\xs H$. Assume further, all Nielsen classes
$\ni(U_i\xs H,\bfC)$ are nonempty. Then,  $\{\ni(U_i\xs H,\bfC)^\inn\}_{i\in I}$
forms a project system with a nonempty limit $\ni(G_{p,I},\bfC)$. 
If $\bfC$ has $r$ entries, then the Hurwitz monodromy group $H_r=\lrang{\row q {r-1}}$ naturally
acts (by
\eqref{twisting}) on any of these inner (or absolute) Nielsen classes. We use just $r=4$.  

\begin{prob} \label{projNiel} Assume $I$ is infinite. What are the maximal groups $G_{p,I}$ from
which we get nonempty limit Nielsen classes
$\ni(G_{p,I},\bfC)$? \end{prob} We call such maximal groups $\bfC$ $p$-Nielsen class limits.
Prob.~\ref{projNielCusp}  refines this.  

For $r=4$, consider maximal limits of
projective systems of reduced components that have genus 0 or 1. Allow $|I|$ here to be 
bounded. The strong Conjecture on Modular Towers
\cite[\S1.2]{schurtype} specializes to $n=3$ to say this. Each such sequence should be bounded and 
there should be only finitely many  (running over all
$p\not\in P_3$).  Such genus
0 or 1 components have application. Ex.~\ref{G1A5} is one such.  

Any profinite pro-$p$ group $\hat P'$ has a universal subgroup generated by $p$th
powers and commutators from $\hat P'$. This is the {\sl Frattini\/} subgroup, denoted
$\Phi(P')$. The $k$th iterate of this group is $\Phi^k(P')$. \S\ref{genus01Mod} referenced reduced
spaces
$\{\bar
\sH_{k,p}^\rd\}_{k\ge 0}$. These are the spaces for the Nielsen classes $\ni(\hat
F_{2,p}/\Phi^k(\hat F_{2,p})\xs H_3,\bfC_{\pm 3})^{\inn,\rd}$.  

\begin{exmp} \label{G1A5} Let
 $\phi_1: G_1(A_5)\to A_5$ be the universal exponent 2 extension of
$A_5$. We explain: If $\phi: G^*\to A_5$ is a cover with abelian exponent 2 group as kernel, then
there is a map $\psi: G_1(A_5)\to G^*$ with $\phi\circ \psi=\phi_1$. The
space $\sH_{2,1}^\rd$ has six components \cite[Ex.~9.1, Ex.~9.3]{BFr}.  Two have genus 0, and two have genus 1. 
Let $K$ be a real number field. Then, 
there is only one  possibility for infinitely many (reduced equivalence classes of) 
$K$ regular realizations of $G_1(A_5)$ with four branch points. It is
that the genus 1 components of $\bar \sH_{2,1}^\rd$ have definition field $K$ and  infinitely many
$K$ rational points. The genus 0 components here have no real points. \cite{schurtype} explains this
in more detail.  \end{exmp}

\subsection{$\sN_2\eqdef\{\ni(G,\bfC_{2^4})^\inn\}_{G\in \Q {F_2}(2)\xs H_2}$} 
The first sentence of Prop.~\ref{H2NC} restates an argument from 
\cite[\S1.A]{FrMT}. 

\subsubsection{Achievable Nielsen classes from modular curves} 
Let $\bz=\{\row z
4\}$ be any four distinct points of $\prP^1_z$, without concern to order. As in \S\ref{standEquiv} 
 choose a set of (four) classical generators for the fundamental group of
$\prP^1_z\setminus
\bz=U_\bz$. 

This group identifies with the free group on four generators $\psigma=(\row
\sigma 4)$, modulo the product-one relation $\sigma_1\sigma_2\sigma_3\sigma_4=1$. Denote
its completion with respect to all normal subgroups by
$\hat  F_\psigma$. Let $\hat \bZ_p$ (resp.~$\hat F_{2,p}$) be the similar completion of $\bZ$
(resp.~$F_2$) by all normal subgroups with $p$ group quotient. 
\begin{prop} \label{H2NC}  Let $\hat D_\psigma$ be
the quotient of $\hat  F_\psigma$ by the relations $$\sigma_i^2=1,\   i=1,2,3,4\ (\text{so\
}\sigma_1\sigma_2=\sigma_4\sigma_3).$$ Then, $\prod_{p\ne 2}\hat \bZ^2_p\xs H_2\equiv 
\hat D_\psigma$. Also, $\hat \bZ^2_p\xs H_2$ is the unique $\bfC_{2^4}$ $p$-Nielsen class limit. 
\end{prop}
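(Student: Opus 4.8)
The plan is to split the proof into a discrete structure computation (the isomorphism $\prod_{p\ne2}\hat\bZ_p^2\xs H_2\equiv\hat D_\psigma$) and an obstruction argument at the first non-abelian level of the tower (the uniqueness claim).

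First I would pin down the \emph{discrete} group $D=F_\psigma/\lrang{\sigma_1^2,\dots,\sigma_4^2}$ underlying $\hat D_\psigma$. Working in the index-$2$ subgroup $D^+$ of even-length words, take $u=\sigma_1\sigma_2$ and $w=\sigma_2\sigma_3$ as generators; conjugation by $\sigma_2$ sends $u\mapsto u^{-1}$ and $w\mapsto w^{-1}$, so $D=D^+\xs\lrang{\sigma_2}$ with $\sigma_2$ a generator of $H_2$ acting by inversion. The only relation not yet used is $\sigma_4^2=(\sigma_3\sigma_2\sigma_1)^2=1$; rewriting $\sigma_3\sigma_2\sigma_1$ in terms of $u,w$ turns this into $[u,w]=1$, so $D^+=\bZ^2$ and $D\cong\bZ^2\xs H_2$. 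Because $\bfC_{2^4}$ consists of $2'$ classes, the unique $p'$-lifting (\cite[Part III]{FrMT}, recalled in \S\ref{projNCs}) restricts the admissible tower to odd primes; completing $\bZ^2\xs H_2$ over its prime-to-$2$ quotients then yields $\prod_{p\ne2}\hat\bZ_p^2\xs H_2\equiv\hat D_\psigma$, which is the argument of \cite[\S1.A]{FrMT}.

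For the uniqueness of $\hat\bZ_p^2\xs H_2$ as the $\bfC_{2^4}$ $p$-Nielsen class limit, fix an odd $p$ and an $H_2$-stable pro-$p$ quotient $U$ of $F_2$, with $\iota$ the automorphism extending $x_i\mapsto x_i^{-1}$. Over $\tau$ the involutions are precisely the $(v,\tau)$ with $\iota(v)=v^{-1}$, and a Nielsen class element is a product-one generating $4$-tuple of these; I would first record that the $U$-conjugation orbit of such an involution is the full translation action on its abelianized coordinates (using $2\in\bZ_p^\times$), so all four lie in one class and the condition $\bfC_{2^4}$ is automatic. The crux is then the first level above the abelianization. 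On $U^{\mathrm{ab}}$ one has $\iota=-1$, so anti-invariance is vacuous; but on the commutator (equivalently, first non-abelian Frattini) quotient $\iota$ acts as $+1$, so $\iota(v)=v^{-1}$ rigidly determines the commutator coordinate of each $(v,\tau)$ as a fixed quadratic function of its abelianized coordinates --- solving $c^2=[x_1,x_2]^{-ab}$ again uses $2\in\bZ_p^\times$. Normalizing $\bar v_2=0$ by conjugation and substituting the four $v_i$ into the product-one relation, its commutator component collapses to a nonzero scalar multiple of $\det(\bar v_1,\bar v_3)$ --- exactly the determinant that the generation condition forces to be a unit. These two demands are incompatible, so no class-$2$ level carries a Nielsen class element.

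I expect this last computation to be the main obstacle, and it is the whole source of the contrast with the $n=3$ case: the delicate point is to make the anti-invariance bookkeeping against the commutator part of product-one precise enough to expose the forced determinant, and then to check that the same collision repeats at every higher Frattini level, so the tower truly collapses onto the abelianization rather than stabilizing at an intermediate $p$-group. Granting this, every $U_i$ carrying a nonempty limit Nielsen class is abelian, hence a quotient of $\bZ_p^2$, so any limit group $G_{p,I}$ is a quotient of $\hat\bZ_p^2\xs H_2$; conversely the abelian tower $U_k=(\bZ/p^k)^2$ --- realized e.g. by the product-one generating tuple with $\bar v_1=(1,0)$, $\bar v_2=0$, $\bar v_3=(0,1)$, $\bar v_4=(1,1)$ --- produces a compatible sequence with nonempty limit and limit group $\hat\bZ_p^2\xs H_2$. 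Hence $\hat\bZ_p^2\xs H_2$ is the unique maximal such group.
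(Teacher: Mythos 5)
Your proposal is correct in substance, and its first and last parts are the paper's own argument in different coordinates. For the presentation, the paper shows $\sigma_1\sigma_2$ and $\sigma_1\sigma_3$ commute (via $\sigma_1\sigma_2=\sigma_4\sigma_3$) and are inverted under conjugation by $\sigma_1$, where you use $u=\sigma_1\sigma_2$, $w=\sigma_2\sigma_3$ and $\sigma_2$; either way the discrete quotient is $\bZ^2\xs H_2$, and its maximal pro-$p$ quotient is $\bZ_p^2\xs\{\pm1\}$. Your explicit tuple $\bar v_1=(1,0)$, $\bar v_2=0$, $\bar v_3=(0,1)$, $\bar v_4=(1,1)$ is the paper's nonemptiness argument for the cofinal family $(\bZ/p^{k+1})^2$ (the paper records a second, geometric proof via multiplication by $p^{k+1}$ on an elliptic curve modulo $\{\pm 1\}$). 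Where you genuinely diverge is the uniqueness claim, and there you work much harder than necessary: the paper's whole point is that any element of $\ni(U\xs H_2,\bfC_{2^4})$ is a $4$-tuple of involutions with product one generating $U\xs H_2$, hence defines a surjection from $\bZ^2\xs H_2$ onto $U\xs H_2$; so $U$ is \emph{automatically} an abelian quotient of $\bZ_p^2$, and no obstruction analysis at nonabelian levels is needed. In other words, your first paragraph already implies your second, and you did not notice. Your class-$2$ computation is nonetheless valid --- in the Heisenberg-type quotient the anti-invariance condition forces $c_i\equiv -a_ib_i/2$, and imposing it on $v_4$ determined by product-one (after normalizing $\bar v_2=0$) yields $a_1b_3-a_3b_1\equiv 0$, contradicting generation --- so what it buys is a concrete picture of why the first nonabelian level dies, at the cost of bookkeeping the presentation already encodes.

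Two flags. The step you defer --- ``the same collision repeats at every higher Frattini level'' --- is, as written, the one genuine gap in your route; it is avoidable without any induction, since a nonabelian $2$-generated pro-$p$ quotient $U$ has the characteristic (hence $H_2$-stable) class-$2$ quotient $U/[U,U'](U')^p$, whose commutator subgroup is cyclic generated by the image of $[x_1,x_2]$, and your determinant computation applies there verbatim --- or, better, drop this branch entirely in favor of your part 1, which is what the paper does. Second, a terminological slip: $\bfC_{2^4}$ consists of involution classes, so in the paper's usage these are \emph{not} $2'$ classes ($2$ divides the orders of their elements); they are $p'$ classes for each odd $p$, which is what restricts the tower to primes $p\ne 2$ and makes the unique $p'$-lifting of \cite[Part III]{FrMT} available. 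Your conclusion there is right, but for the stated reason read ``$p'$ for all odd $p$'' in place of ``$2'$''.
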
 

\begin{proof} We show combinatorially  that $\hat D_\psigma$ is  
$\hat\bZ^2\xs H_2$ and that  $\sigma_1\sigma_2$ and 
$\sigma_1\sigma_3$ are independent generators of $\hat\bZ^2$. Then,
$\sigma_1$ is a generator of $H_2$ which we regard as $\{\pm 1\}$ acting on $\hat\bZ^2$
by multiplication. First: 
$\sigma_1(\sigma_1\sigma_2)\sigma_1=\sigma_2\sigma_1$ shows
$\sigma_1$ conjugates $\sigma_1\sigma_2$ to its inverse. Also, 
$$(\sigma_1\sigma_2)(\sigma_1\sigma_3)= 
(\sigma_1\sigma_3)\sigma_3(\sigma_2\sigma_1)\sigma_3=
(\sigma_1\sigma_3)(\sigma_1\sigma_2)$$ 
shows the said generators commute. The maximal pro-$p$ quotient is 
$\bZ_p^2\xs\{\pm1\}$. 

We have only to show  Nielsen classes with $G=U\xs
H_2$, and $U$ an abelian quotient of $\bZ^2$, are nonempty. It suffices to deal with
the cofinal family of $U\,$s, $(\bZ/p^{k+1})^2$, $p\ne 2$. \S\ref{nonempNielsen2} has two proofs.
\end{proof}  

\begin{rem} Any line (pro-cyclic 
group) in $\bZ_p^2$ produces a {\sl 
dihedral group\/}  $D_{p^\infty}$ from multiplication by -1 on this 
line. \end{rem} 

\subsubsection{Nonempty Nielsen classes in Prop.~\ref{H2NC}}
\label{nonempNielsen2} In $G_{p^{k+1}}=(\bZ/p^{k+1})^2\xs \{\pm 1\}$, $\{(-1;\bv) \mid\bv\in
(\bZ/p^{k+1})^2\}$ are the involutions. Write   $\bv=(a,b)$,
$a,b\in \bZ/p^{k+1}$. The multiplication $(-1;\bv_1)(-1;\bv_2)$ yields $(1;\bv_1-\bv_2)$ as in 
the matrix product $$\smatrix {-1} {\bv_1} 0 1  \smatrix {-1}
{\bv_2} 0 1.$$ 

We have an explicit description of the Nielsen classes
$\ni(G_{p^{k+1}},\bfC_{2^4})$. Elements are 4-tuples $((-1;\bv_1),\dots,(-1;\bv_4))$
satisfying two conditions from \S\ref{nc1}:
\begin{edesc} \item Product-one: $\bv_1-\bv_2+\bv_3-\bv_4$; and
\item Generation:  $\lrang{\bv_i-\bv_j, 1\le i, j\le 4}=(\bZ/p^{k+1})^2$.
\end{edesc} Apply  conjugation in $G_{p^{k+1}}$ to assume $\bv_1=0$. Now take $\bv_2=(1,0)$, 
$\bv_3=(0,1)$ and solve for $\bv_4$ from the product-one.  This shows the Nielsen class is
nonempty. To simplify our discussion we have taken inner Nielsen classes. What really makes an
interesting story is the relation between inner and absolute Nielsen classes. Use the natural
inclusion $G_{p^{k+1}}\norm (\bZ/p^{k+1})^2\xs \GL_2(\bZ/p^{k+1})$  regarding both groups as
permutations of $(\bZ/p^{k+1})^2$.

A general theorem in \cite{FrVMS} applies here. It says the
natural map from $\sH(G_{p^{k+1}},\bfC_{2^4})^\inn\to
\sH(G_{p^{k+1}},\bfC_{2^4})^\abs$ is Galois with group
$\GL_2(\bZ/p^{k+1})/\{\pm1\}$. We give our second proof for nonempty Nielsen classes to clarify the
application of 
\cite[IV-20]{SeAbell-adic}. This shows, depending on the
$j$-value of the 4 branch points for the cover $\phi_\bp: X_\bp\to \prP^1_z$, we can say
explicit things about the fiber of
$\sH(G_{p^{k+1}},\bfC_{2^4})^\inn\to
\sH(G_{p^{k+1}},\bfC_{2^4})^\abs$ over $\bp\in \sH(G_{p^{k+1}},\bfC_{2^4})^\abs$.  

We will now display the cover $\phi_\bp$. Let $E$ be any elliptic curve in Weierstrass
normal form, and
$[p^{k+1}]: E \to E$ multiplication by $p^{k+1}$. Mod out by the action of $\{\pm 1\}$ on both
sides of this isogeny to get $$E/\{\pm 1\}=\prP^1_w\mapright{\phi_{p^{k+1}}} E/\{\pm
1\}=\prP^1_z,$$ a degree
$p^{2(k+1)}$ rational function. Compose  $E\to E/\{\pm 1\}$ and $\phi_{p^{k+1}}$  for the Galois
closure of $\phi_{p^{k+1}}$. This geometrically shows 
$\ni(G_{p^{k+1}},\bfC_{2^4})\ne \emptyset$. If $E$ has definition field $K$, so does
$\phi_{p^{k+1}}$. We may, however, expect the Galois closure field of $\phi_{p^{k+1}}$ to have 
interesting constants, from the  definition fields of $p^{k+1}$ division
points on
$E$. This is the subject of Serre's Theorem and 
\cite[\S 5.2-5.3]{FrExcTow}. 

\subsection{$\sN_3\eqdef \{\ni(G,\bfC_{\pm 3^2})^\inn\}_{G\in \Q {F_2}(3)\xs H_3}$} 
Prop.~\ref{H3NC} says, unlike  $n=2$,   for any 
$G\in \sN_3$, $\ni(G,\bfC_{\pm 3^2})^\inn$  is nonempty. This vastly differs from the conclusion of
Prop.~\ref{H2NC}. Our proof combines Harbater-Mumford reps. (Ex.~\ref{HMreps}) with the {\sl
cyclic\/}  action of $H_3$ on
$(\bZ/p)^2$.  We make essential use of the Frattini property.   
 
\begin{prop} \label{H3NC} $\hat F_{2,p}\xs H_3$ is the unique $\bfC_{\pm 3^2}$ $p$-Nielsen
class limit.  
\end{prop}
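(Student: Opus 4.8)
The plan is to split the statement into a nonemptiness assertion and a formal maximality assertion. Maximality is the easy half: every limit group $G_{p,I}=(\lim_i U_i)\xs H_3$ arising in Prob.~\ref{projNiel} is built from $p$-group quotients $U_i\in\Q {F_2}(3,H_3)$ of $F_2$, each of which is a quotient of the pro-$p$ completion $\hat F_{2,p}$; hence $G_{p,I}$ is a quotient of $\hat F_{2,p}\xs H_3$. So once I know that $\hat F_{2,p}\xs H_3$ \emph{itself} occurs as a limit, i.e.\ that $\ni(\hat F_{2,p}\xs H_3,\bfC_{\pm 3^2})$ is nonempty, it is automatically the unique maximal one. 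Thus the entire content is to prove nonemptiness of the limit Nielsen class, and I would obtain this by showing all finite levels are nonempty and invoking the projective-system fact of \S\ref{projNCs}.

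For the finite levels I would use the cofinal characteristic tower $U_k=\hat F_{2,p}/\Phi^k(\hat F_{2,p})$ (each $\Phi^k$ is characteristic, hence $H_3$-stable, with inverse limit $\hat F_{2,p}$), and produce elements of $\ni(U_k\xs H_3,\bfC_{\pm 3^2})$ by the Harbater--Mumford recipe of Ex.~\ref{HMreps}: a tuple $(g_1,g_1^{-1},g_2,g_2^{-1})$ satisfies product-one for free, and if $g_1,g_2$ are chosen in the class $\C_+$ lying over a fixed generator $\tau$ of $H_3$ then $g_1^{-1},g_2^{-1}$ lie in $\C_-=\C_+^{-1}$, giving the class vector $\bfC_{\pm 3^2}$ up to order. (Here $\C_\pm$ are well defined and single throughout the tower by unique lifting of $p'$ classes, \cite[Part III]{FrMT}.) The only nontrivial requirement left is generation, $\lrang{g_1,g_2}=U_k\xs H_3$, and this is where the Frattini property enters. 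Since $\Phi(U_k)$ is characteristic in the normal $p$-group $U_k$, the standard Frattini argument reduces generation of $U_k\xs H_3$ to generation of $(U_k/\Phi(U_k))\xs H_3$; and $U_k/\Phi(U_k)$ is the common Frattini quotient $(\bZ/p)^2$ for \emph{every} $k$. So a single base computation, uniform in $k$, will settle all levels at once.

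The base computation is generation of $V\xs H_3$ with $V=(\bZ/p)^2$ by two Harbater--Mumford elements over $\tau$, and this is the step I expect to be the crux. Let $M$ be the order-$3$ matrix by which $\tau$ acts on $V$, so $M^2+M+1=0$ with eigenvalues the primitive cube roots of unity $\omega,\omega^2$. Two facts need care. First, that $\C_+$ really consists of all lifts $(v,\tau)$ of $\tau$: conjugating $(v,\tau)$ by $(u,1)$ moves it to $((1-M)u+v,\tau)$, and $1-M$ is invertible on $V$ because $\det(1-M)=(1-\omega)(1-\omega^2)=3$ is a unit mod $p$ (this is exactly where $p\ne 3$ is used), so all these lifts are conjugate and each has order $3$ from $1+M+M^2=0$. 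Second, that $V$ is a \emph{cyclic} $H_3$-module: the product $g_1g_2^{-1}$ lands in $V$ as a twist of $v_1-v_2$, so $\lrang{g_1,g_2}$ contains the submodule $\lrang{v_1-v_2,\,M(v_1-v_2)}$, which is all of $V$ as soon as $v_1-v_2$ is not an $M$-eigenvector (automatic when $p\equiv2\bmod3$, and a generic choice when $p\equiv1\bmod3$). Since $\lrang{g_1,g_2}$ also surjects onto $H_3$, it is all of $V\xs H_3$. This cyclicity is precisely the structural feature that fails in the $n=2$ case of Prop.~\ref{H2NC}, where the involutory action collapses the limit to the abelian $\hat\bZ_p^2\xs H_2$; the order-$3$ action here imposes no such collapse, which is why the full free pro-$p$ group survives. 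Assembling the nonempty finite levels through the projective system then yields the nonempty limit class, completing the nonemptiness half and hence the proposition.
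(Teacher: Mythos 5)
Your proposal is correct and takes essentially the same route as the paper's proof: Harbater--Mumford representatives $(g_1,g_1^{-1},g_2,g_2^{-1})$ at the base level $(\bZ/p)^2\xs H_3$, with invertibility of $I-\alpha$ (i.e.\ $p\ne 3$) normalizing the lifts and the distinct eigenvalues of $\alpha$ making $(\bZ/p)^2$ a cyclic $\lrang{\alpha}$-module for generation, followed by the Frattini property to lift order-3 generators through the tower and the projective-system fact of \S\ref{projNCs} for the limit. Your explicit use of the $\Phi^k(\hat F_{2,p})$-tower and your spelled-out maximality half merely make explicit what the paper's proof (which runs the Frattini argument for arbitrary $G=U\xs H_3$ in $\sN_3$) leaves implicit.
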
 

\begin{proof} First we show Nielsen classes with $G=G_p=(\bZ/p)^2\xs H_3$ ($p\not=2$) are
nonempty by showing each contains an H-M rep.  Let $\lrang{\alpha}=\bZ/3$, with the
notation on the left meaning multiplicative notation (so, 1 is the identity). The action
of $\alpha$ is from  
\eql{Hnact}{Hnact3}. For multiplication in $G_p$ use the analog of that in
\S\ref{nonempNielsen2}. 

Suppose  $g_1=(\alpha,\bv_1), g_2=(\alpha,\bv_2)\in G$ generate $G$. Then,
$(g_1,g_1^{-1},g_2,g_2^{-1})$  is in $\ni(G,\bfC_{\pm 3^2})^\inn$. Further this element is an
H-M rep. Conjugate this 4-tuple by
$(1,\bv_3)$ (for inner equivalence) to change $g_1$ to
$(\alpha,\bv_1+\bv_3-\alpha(\bv_3))$. As 
$I_2-\alpha$ is invertible on $(\bZ/p)^2$ we can choose $\bv_3$ so $\bv_1={\pmb 0}$. To find such
generators, consider 
$g_1g_2^{-1}=(1,-\bv_2)$ and
$g_1^2g_2=(1,\alpha^{-1}(v_2))$. So, $g_1,g_2$ are generators precisely
if $\lrang{-\bv_2, \alpha^{-1}(v_2)}=(\bZ/p)^2$. Such a $\bv_2$ exists because the eigenvalues of
$\alpha$ are distinct. So $(\bZ/p)^2$ is a cyclic $\lrang{\alpha}$
module. If $x^2+x+1 \mod p$ has no solution,
then
$\lrang{\alpha}$ acts irreducibly and any 
$\bv_2\not={\pmb 0}$ works. From quadratic reciprocity this is equivalent to $-3$ is not a square
$\mod p$, or $p\equiv -1 \mod 3$.

Now consider any Nielsen class in $\sN_3$ defined by $G=U\xs H_3$ with $U$ having $(\bZ/p)^2$ as
a quotient. There is a surjective map
$\psi: G\to (\bZ/p)^2\xs H_3$, and it is a Frattini cover. So, if $g_1',g_2'$ are
generators of $(\bZ/p)^2\xs H_3$ given by the proof above, then any respective order 3
 lifts of $g_1',g_2'$ to
$g_1,g_2\in G$ will automatically generate $G$. Therefore the representative
$(g_1,g_1^{-1},g_2,g_2^{-1})$  of the Nielsen class $\ni(G,\bfC_{\pm 3^2})^\inn$ lifts
$(g_1',(g_1')^{-1},g_2',(g_2')^{-1})$. This shows all Nielsen classes are nonempty. 
\end{proof}

\subsection{Projective cusp types} \label{cusptype} Let $r=4$ and $\bg\in
\ni(G,\bfC)^{\inn,\rd}$ (or any reduced equivalence). Then, the orbit of $\bg$ under the cusp
group $\Cu_4\eqdef\lrang{q_2,\sQ''}$ interprets as a cusp of the corresponding $j$-line cover. 
Certain  Nielsen class representatives  define especially useful cusps. We define these relative to
a prime $p$. 
\cite[\S5.2]{schurtype} calls these g-$p'$ cusps, and when $r=4$, their representatives
$\bg=(\row g 4)$ in reduced Nielsen classes have the following property:   
\begin{triv} Both $H_{2,3}(\bg)\eqdef\lrang{g_2,g_3}$ and $H_{1,4}(\bg)=\lrang{g_1,g_4}$ are $p'$
groups; their orders are prime to $p$. \end{triv} 

Recall the shift $\sh=q_1\cdots q_{r-1}$ as an operator on Nielsen classes: For $$\bg\in
\ni(G,\bfC),\ (\bg)\sh=(g_2,\dots,g_r,g_1).$$  

\begin{exmp}[H-M reps.] \label{HMreps} Suppose $h_1$ and $h_2$ are elements generating a group
$G$ and consider the 4-tuple $\bh=(h_1,h_1^{-1},h_2,h_2^{-1})$. If $\bh\in \ni(G,\bfC)$, we say
$\bh$ is  a {\sl Harbater-Mumford\/} representative (H-M rep.) of the Nielsen class. If $\bfC$ are 
$p'$ conjugacy classes, then $(\bh)\sh$ is a 
a representative for a
$p'$ cusp. 
\end{exmp} 

\cite[Prop.~5.1]{schurtype} generalizes how we use H-M
reps., in Prop.~\ref{H3NC} as follows. Suppose level 0 of a Modular Tower, for the prime $p$, 
has representatives of g-$p'$ cusps. Then   applying Schur-Zassenhaus gives
projective systems of g-$p'$ cusps. This is the only method we know to show there are nonempty
Nielsen classes  at all levels. 

So, we don't know whether g-$p'$ cusps are necessary for a result
like Prop.~\ref{H3NC}. For this case alone we pose a problem that aims at deciding that.  Use
the action of $H_4=\lrang{q_1,q_2,q_3}$ in \S\ref{projNCs} (from \eqref{twisting}). For 
Nielsen classes from
$\sN_3$, it is easy to show all representatives of g-$p'$ cusps ($p\ne 3$) must be shifts of H-M
reps.

\begin{prob} \label{projNielCusp} Suppose $O$ is an $H_4$ orbit on $\ni(\hat
F_{2,p}\xs H_3,\bfC_{\pm 3})^{\inn,\rd}$. Must $O$ contain the shift of an H-M rep.?  
 \end{prob} 

\section{Relating \Prob1 and \Prob2} Our last topic discusses, using the work of others, what
might be the meaning and significance of an intrinsic uniformizer for a 1-dimensional genus 0 moduli
space. 

\!\!\!\subsection{Explicit equations}  \label{expEqs} As \S\ref{nc2} notes, our computations 
were with pencil  and paper. Many saw the then unpublished pieces we used from \cite[p.~349]{Fr95}
and
\cite[\S8]{Fr-Schconf}   in 
1969 at the writing of
\cite{FrRedPol}.  Precisely relating to the $j$-line is more recent, though you see these
computations are easy. Further, they create a framework for finding a parameter for a moduli cover
of the $j$-line having an internal interpretation. 

\subsubsection{Couveignes calculations} Jean-Marc Couveignes used computer-assisted calculations to
find equations for Davenport pairs \cite{couveignes2}. He also gave a technique for uniformizing
some moduli of the type we are considering. Necessarily, the moduli was of genus 0 covers,
and the moduli space of genus 0 \cite{couveignes1}.  Prop.~\ref{H7comp} and
Prop.~\ref{H13comp} produce  a natural geometry behind the genus 0 moduli from Davenport's
arithmetically defined problem. We compare our computation with that from
\cite{couveignes2}, whose tool was {\bf PARI Ver.~1.920.24} with {\bf Maple} used as a check. 

Equations for those degree 7, 13 and 15  polynomials are in 
\cite[\S5.1]{couveignes2}, \cite[\S5.3]{couveignes2} and \cite[\S5.4]{couveignes2}.
\cite[p.~593]{RETExp} has Birch's brute force calculation of degree 7 Davenport
pairs. He and Guy also did the degree 11 case (three branch point covers, so up
to reduced equivalence this is just one pair of polynomials). Degree 13 is the most interesting.
There the difference set argument produces the nontrivial intertwining of four polynomials for each
point of the space
$X_{13}^\infty$.  As the  \cite{couveignes2} calculations take
considerable space, we don't repeat them. Still, a statistical comparison 
indicates the extra complexity (supported by theory) in the degree 13 case.  Here
is the character count for  Couveignes' expression for the 
general polynomial in each case (not counting spaces): 
\begin{triv} degree 7: 146 ; degree 13: 1346 ;  degree 15: 819 .
\end{triv} 

\subsubsection{Significance of the $j$-line cover} 
Neither Birch nor Couveignes relates  their equations to the $j$-line. Still, two points
help compare \cite{couveignes1} with  our goals. 

\begin{edesc} \label{couvPts} \item \label{couvPtsa} Using \cite{GHP}, Harbater patching 
can sometimes produce the equation for the general member a family of genus 0
covers. 
\item \label{couvPtsb} Using formal fibers of a moduli problem avoids direct computation of a 
possibly large base extension  (when no version of rigidity holds). 
\end {edesc} I think \eql{couvPts}{couvPtsa} refers to moduli of genus 0
covers, when a Nielsen class gives data to normalize a 
parameter for each cover in the family. We explain below this computational
handle that starts from degenerate situations and at the cusps. 

Grothendiecks' original method,
reviewed in \cite[\S 3.6]{Fr-Schconf},  is totally compatible. Except,  applying
Grothendieck's method requires a computationally inexact {\sl Artin approximation\/} to achieve an
algebraic deformation. It is around that last step that Couveignes uses his  
genus 0 assumptions. 

The example of \cite[p.~48]{couveignes2}  has some version of  reduced parameters, though over the $\lambda$-line, not the $j$-line. \cite[p.~56]{couveignes2}
gives the practical sense of \eql{couvPts}{couvPtsb}. We use his formal deformation
parameter $\mu$.  His example serves no exterior problem.  Still, it illustrates the
computational technique. Though we admire it, we want to show why there are
many constraints on its use to  compute equations.  

\subsubsection{Constraints in Couveignes' example} \label{couveignesEx}
The group for his Nielsen classes is $S_7$, and his conjugacy classes $\bfC^c$ are those represented
by the entries of 
\begin{equation} \label{muc} \bg_\mu=((2\,3\,4)(6\,7),(1\,2\,5\,6),(1\,7),(2\,3\,4\,5\,6 \,
7)^{-1}).\end{equation} He interpolates between two 3 branch point covers with respective branch
cycles: 
\begin{equation} \label{mu01c} \begin{array}{rl}
\bg_0=&((2\,3\,4)(6\,7),(1\,2\,5\,6),(1\,2\,3\,4\,5\,6\, 7)^{-1})\text{ and; }\\
\bg_1=&((1\,2\,3\,4\,5\,6\,7), (1\,7),(2\,3\,4\,5\,6\, 7)^{-1}).\end{array} \end{equation}
His four conjugacy classes are all distinct. Then, the reduced family
of absolute covers in the Nielsen class, as a $j$-line cover,  automatically factors through
the $\lambda$-line as $\bar\psi^c: \bar \sH(S_7,\bfC^c)^{\abs,\rd}\to \prP^1_\lambda$. He finds
explicit equations $\phi_0$ and $\phi_1$ for the {\sl unique\/} covers branched over
$\{0,1,\infty\}\in \prP^1_j$ with branch cycles represented by $\bg_0$ and $\bg_1$. 
An analyst would view this as forming $\phi_\mu: \prP^1_{w_\mu}\to \prP^1_z$ as a function
of
$\mu\in (0,1)$ so that as $\mu\mapsto 0$ (resp.~1) $\phi_\mu$ degenerates to $\phi_0$
(resp.~$\phi_1$). 

Topologically this is a {\sl coalescing\/}, respectively, of the first two
(last two) branch points. Realize, however, compatible with the statement in \S\ref{standEquiv},
this is not an algebraic process. We want equations for $\phi_\mu$ in the coordinates of the
parameter space
$\bar \sH(S_7,\bfC^c)^{\abs,\rd}$. \cite[p.~43-48]{couveignes2} refers to  \cite[Thm.~4.5]{DFr} to compute the action of the
$\lambda$-line (not $j$-line) version of $\gamma_0^*,\gamma_1^*,\gamma_\infty^*$ (as in
Propositions~\ref{H7comp} and \ref{H13comp}). He
concludes 
$\bar
\sH(S_7,\bfC^c)^{\abs,\rd}$ has genus 0. Dropping reduced equivalence gives a fine moduli
space (reason as in \S\ref{bfine}).  

To track the $\lambda$-line, we must express coordinates for $\bp\in
\sH(S_7,\bfC^c)^{\abs,\rd}$    in the algebraic closure of $\bQ(\lambda)$.  This   
space is irreducible, from the analog of $\gamma_0,\gamma_1,\gamma_\infty$
acting transitively. For the same reason as in our Davenport pair examples, reduced equivalence gives a 
b-fine moduli space (\S\ref{bfineMod}). These are hypotheses that satisfy the easiest case of {\sl
braid rigidity\/}. So, the cardinality of the  reduced Nielsen class equals the degree of those
coordinates over
$\bQ(\lambda)$ (\cite[Cor.~5.3]{FrHFGG}, \cite[Thm.~5.3]{MM} or \cite[\S10.3.1]{VB}; responding to
the potential problem of \eql{couvPts}{couvPtsb}). 

\cite[p.~50]{couveignes1} uses a set of classical generators (as in \S\ref{standEquiv}) around
branch points. Then, the effect of coalescing a pair of branch points, say, as $\mu\mapsto 1$
interprets simply.  It is as if you replaced the 3rd and 4th of the classical generators by
their product. Still, this is topological, not algebraic, data. 

For that he  normalizes a
parameter
$w_\mu$ (to appear in $\phi_\mu(w_\mu)$) by selecting 3 distinguished points on the
cover $\phi_\mu$. These correspond to selecting 3 distinguished disjoint cycles in the branch
cycles
$\bg_\mu$ in \eqref{muc}. He wants  
$w_{\mu}=w_{\mu,1}$ to survive (in the limit 
$\mu \mapsto 1$) to give $w_1$ (similarly with $w_0$). So, the points he chooses must survive
the coalescing. It is a constraint on the explicit equations that he can pick three such
points.  It is a separate constraint that he can do the same on the other limit
$\mu\mapsto 0$, producing a parameter $w_{\mu,0}$ for $\mu$ near 0.  

Here in \cite[p.~50-51]{couveignes1} a reader might have difficulty (see the
reference).  Naming the three points (labeled $V_3,V_2,V_6$), where $w_{\mu,1}$ takes respective
values $0,1,\infty$ (chosen for
$\mu\mapsto 1$)  appears after their first use in equations. That is because of a misordering of
the printed pages.  He analytically continues these choices 
along $\mu\in (0,1)$. They assure 
$w_{\mu,0}/w_{\mu,1}$ is $w_{\mu,0}(V_2)/w_{\mu,1}(V_2)$, expressed in meaningful
constants from ramification of $\bar\psi^c$, as a function of $\mu$. 
\cite{couveignes1} then explains what to do with expressions for $\phi_\mu(w_{\mu,k})$, $k=0,1$
expressed as local power series. He uses their truncation up to the necessary degree of accuracy for
their determination from the algebraic conditions. Details  on the genus 0 moduli space come
into play to {\sl precisely\/} express coefficients of a general member of the family.  

\subsubsection{Using geometric compactifications} Couveignes applied  coordinates from \cite{GHP}
for his explicit  compactification. Many use a compactification around cusps, 
placing over the cusp something called an {\sl admissible\/} cover of (singular) curves. Arithmetic
applications require knowing that the constant field's absolute Galois group 
detects the situation's geometry. My version is a {\sl specialization
sequence\/}
\cite[Thm.~3.21]{FrMT}. This gives meaningful  action on projective
sequences of cusps. The goal is to see exactly how $G_\bQ$ acts 
from its preserving geometric collections like g-$p'$ cusps (\S\ref{projNCs}). 

My treatment, however, did not aim at explicit equations. 
\cite[Thm.~1.3 and Thm.~4.1]{DDE} gave a treatment of \cite[Thm.~3.21]{FrMT} using {\sl admissible
covers\/} (the version in
\cite{Wewers}). For that reason, they compactify with a family that has {\sl H-M admissible\/}
covers around the H-M cusps. (They use Hurwitz, not reduced Hurwitz, spaces.) 
A corollary of \cite[Thm.~3.21]{FrMT} has simple testable hypotheses that guarantee there is a
unique component of the moduli space containing H-M cusps (so it is over $\bQ$). Those
hypotheses rarely hold if $r=4$.  

So, I wish someone could do the following. 

\begin{prob} Approach the genus 1 components in Ex.~\ref{G1A5} as did Couveignes for his
examples. \end{prob}  Couveignes' explicit constrains  fail miserably here.
Ex.~\ref{G1A5} is a family of very high genus curves with no distinguished disjoint cycles in their
branch cycle descriptions.  Still, the topics of
\cite{DDE} and
\cite{Wewers} are relevant.  The two genus 1 components of Ex.~\ref{G1A5} are {\sl both\/} H-M
rep.~components. The two components come from corresponding orbits for $H_4$ acting on inner
Nielsen class orbits. They are, however, not total mysteries: An outer automorphism of
$G_1(A_5)$ joins the orbits. (None of that is obvious; \cite[\S7]{schurtype} will have complete
documentation.) 

\subsubsection{An intrinsic uniformizing parameter}  
Interest in variables separated polynomials $f(x) -g(y)$, those  \S\ref{DPs} calls Davenport
pairs, first came from factorization questions (\cite{DLSf-g} and
\cite{DSf-g}). As \cite[\S 1.1.2]{FrExcTow} explains,
Davenport's own questions showed his interest in the finite field properties. That
exhibited them  with delicate arithmetic properties.
Given that, we should express a uniformizing parameter, for $X_n$ ($n=7,13,15$) 
using the arithmetic behind their investigation. We don't know if there is such, though
\cite{FrExcTow} suggests some based on functions in $(x,y)$ (satisfying $f(x) -g(y)=0$) that are
constant along fibers to the space $X_n$. 

\subsection{A piece of John Thompson's influence} \label{Thomp}  I speak only of John's influence
in specific mathematical situations related to this paper. John and I had a conversation
in 1986 on the way to lunch at University of Florida. This one conversation brings a {\sl
luminous\/} memory, so singular it may compress many conversations. 

\subsubsection{John's formation of \Prob2} As we walked, I summarized the group theory  from
solving  several problems, like Davenport's. Each came with an    
an equation  that we could rewrite as a phrase on  primitive (genus 0)
covers. My conclusion: There were always but finitely many rational function degree
counterexamples to the most optimistic hopes. Yet,  there were some counterexamples. 

Further, to solve these problems required nonobvious aspects of groups. For
example: In Davenport's problem, we needed difference sets and knowledge of all the  finite groups
with two distinct doubly transitive permutation  representations that were equivalent as group
representations (I got this from \cite{CuKanSe}). Quite like the classification, many 
simple groups and some new number theory,  impinged on locating the polynomial Davenport pairs.
That was my pitch. 

John responded that he was  {\sl seized\/} with the underlying 
problem. His initial formulation was this. Suppose $G$ is a {\sl composition\/} factor of the
covering monodromy of $\phi:X\to \prP^1_z$ with $X$ of  genus
$g$. Then, it is a genus $g$ group. 

\begin{prob} We fix an integer $g$ and   
exclude alternating and cyclic groups. Show:  Only finitely many simple 
groups have genus $g$.  
\end{prob} 

As above, the genus 0 primitive cover case suffices. 
This was my encouragement for the project. It is 
nontrivial to {\sl grab\/} a significant  monodromy group at 
random. (You will always get the excluded groups.) Still, the genus 0 problem
would display exceptions. These should contribute conspicuously, as happened with the Schur,
Davenport and  HIT problems. That is, a general theorem would have sporadic 
counterexamples. While they might be baffling, they would nevertheless 
add to the perceived depth of the result. Especially they would guide
situations of higher genus, and in positive characteristic.  Example: I suggested there would be new
primitive rational functions  beyond those coming from elliptic curves, that had the {\sl Schur
cover  property}: Giving one-one maps on $\prP^1_z(\bF_p)$ for $\infty$-ly many primes $p$. 

John suggested we work toward this immediately. I had hoped, then, he would be interested in my
approach to using the universal $p$-Frattini cover of a finite group. My response was that Bob
Guralnick  enjoyed this type of problem and knew immensely more 
about the classification than I. So was born \Prob2, and the collaboration of Guralnick-Thompson. 

\subsubsection{Progress on \Prob2} John showed me the initial list from his
work  with Bob on the affine group case. The display mode was groups presented by branch
cycle generators: an absolute Nielsen class (\S\ref{nc1}).    I noted three  
degree 25 rational functions requiring just the {\sl branch cycle argument\/} \cite[p.~62]{FrHFGG}
to see they had the Schur cover property. (You measure this by distinguishing between its arithmetic
and geometric monodromy groups.)  meant it did not
come from elliptic curves, or twists of cyclic or Chebychev polynomials.  

We now have a list of Nielsen classes sporadic for the Schur
property (Schur-sporadic; \cite[Thm.~1.4]{GMS}). \cite[\S7.2.1]{FrExcTow} uses this as we did
M\"uller's polynomial 0-sporadic list in this paper. 

{\sl \Prob2 in John's form is true}. There are only finitely many sporadic genus 0 groups. 
Most major contributors are in this chronological list: \cite{GT90}, \cite{As90}, 
\cite{LS91},  \cite{S91}, \cite{GN92}, \cite{GN95}, \cite{LSh99}, and  \cite{FM01}. Reverting to
the primitive case parceled the task through the 5-branch Aschbacher-O'Nan-Scott classification of
primitive groups
\cite{AschOnanScott}.  

\subsubsection{Guralnick's optimistic conjecture} \label{nonSporadics}  Yet, there is an
obvious gap between the early papers and the two at the end. The title of  \cite{LSh99} reveals it
did not list examples precisely as John did at the beginning. We can't yet expect  the
{\sl exceptional\/} Chevalley groups to fall easily to such explicitness; you can't grab your
favorite permutation representation with them. Still, composition factors are one thing, actual 
genus 0 primitive monodromy
groups another. 

\begin{defn} We say
$T:G\to S_n$, a faithful permutation representation,  with properties \eqref{0-sporadica} and
\eqref{0-sporadicb} is {\sl
$0$-sporadic\/}. \end{defn} 
Denote  $S_n$  on 
unordered $k$ sets of $\{1,\dots,n\}$ by $T_{n,k}:S_n\to S_{\scriptscriptstyle{{n\choose k}}}$
by ($T_{n,1}$ the standard action). Alluding to $S_n$ (or $A_n$) with $T_{n,k}$ nearby
refers to this presentation. In \eqref{0-sporadicb}, 
$V_a=(\bZ/p)^a$ ($p$ a prime). Use \S\ref{nonempNielsen2} for semidirect product in the 
$T_{V_a}$ case on points of $V_a$; $C$ can be $S_3$. For the second $(A_n,T_{n,1})$ case, $T:G\to
S_{n^2}$. 
\begin{triv} \label{0-sporadica} $(G,T)$ is the monodromy group of a primitive (\S\ref{nc1})
compact Riemann surface cover
$\phi:X\to\prP^1_z$ with
$X$ of genus $0$.  \end{triv}  
\begin{triv} \label{0-sporadicb} $(G,T)$ is not in this list of group-permutation types. \end{triv}
\!\begin{itemize} \item  $(A_n,T_{n,1})$: $A_n\le G\le S_n,\text{ or } A_n\times A_n \xs \bZ/2 \le G
\le S_n\times S_n\xs \bZ/2$. 
\item $(A_n,T_{n,2})$: $A_n\le G\le S_n$. 
\item $T_{V_a}$: $G= V\xs C$,\  $a\in \{1,2\}$, $|C|=d\in \{1,2,3,4,6\}$ and $a = 2$
only if
$d$ does not divide $p-1$. 
\end{itemize}

Rational functions $f\in \bC(x)$ represent 0-sporadic groups by 
$f:\prP^1_x\to \prP^1_z$. We say 
$(G,T)$ is {\sl polynomial 0-sporadic}, if some $f\in\bC[x]$ represents it.

\begin{defn} Similarly, we say $(G,T)$ is $g$-sporadic if \eqref{0-sporadica} holds replacing genus
0 by genus $g$.\end{defn} For $g$-sporadic, the list of \eqref{0-sporadicb} is too large.
\cite[Thm.~4.1]{GMS} tips off the adjustments for $g=1$ (\S\ref{qualvsquan}). For,  $g > 1$,
$g$-sporadic groups should be just
$A_n\le G\le S_n$ of $T_{n,1}$ type, and cyclic or dihedral groups. 

\cite{GSh04} has 0-sporadics with an $A_n$ component. \cite{FGMa02} has 0-sporadics
groups with a rank 1 Chevalley group component. Magaard has
written an outline of the large final step: Where components are higher rank Chevalley groups. 
Like the classification itself, someone going after a concise list of such examples for a 
particular problem will have difficulty culling the list for their problem. Various
lists of the 0-sporadics appear in many papers. 

If someone outside group theory comes upon a problem suitable for the monodromy method
or some other, can they go to these papers, look at the lists and finish their
projects? 
\cite{Solomon} has anecdotes and
lists on the classification that many non-group theorists can read. Pointedly,
however, is it sufficient to allow you or I  to have replaced any contributor to \cite{GMS}?
Unlikely! How about to read
\cite{GMS}? Maybe! Yet, not without considerable motivation. 

One needs 
familiarity with the relation between primitive subgroups of $S_n$ and simple groups, the
description from Aschbacher-O'Nan-Scott.  That does not rely on the classification.
Rather, it treats simple group appearances as a black box. To decide  if there are simple groups
satisfying extra conditions contributing to the appearance of a particular primitive group,
you must know special information about the groups in \cite [p.~341]{Solomon}. 
  
\subsubsection{Qualitative versus quantitative} \label{qualvsquan} John's desire for documenting
0-sporadic groups added many pages  to the literature. What did particular examples do? How does
one present specific examples to be useful? We have been suggesting   
\S\ref{MullList} as a model. Mueller's list 
reveals just how relevant was Davenport's Problem for nailing polynomial 0-sporadic groups. Other examples, like
\cite{FGS} and \cite{GMS}, use a condition about a group normalizing $G$. This eliminated much of
the primitive group classification. A seeker after applying the same method may find they, too,
have such a useful condition, making it unnecessary to rustle through many of the lists like
\cite{FGMa02} or
\cite{GSh04}. We explain. Warning: An $(S_5,T_{5,2})$ sporadic case 
occurs in answer a simple question about all indecomposable $f\in \bZ[x]$ on Hilbert's
irreducibility theorem. Nor can you just look at a 0-sporadic Nielsen class to decide if it has an 
arithmetic property to be HIT-sporadic (a name coined for this occasion; \cite[\S2-\S3]{DFr99}).
From experience, these sporadics, in service of a real problem, attract all the attention. 

\cite[Thm.~1.4]{GMS}  classified 0-sporadics with  the Schur property (Schur-sporadics) over number
fields.
In \cite[p.~586]{RETExp} we used the Schur property to show how to handle an entwining between  
arithmetic and geometric monodromy groups. Group theory setup: Two subgroups $G\le \hat G\le
S_n$  of $S_n$ have this property.  
\begin{triv} \label{taucoset} There is a
$\tau\in
\hat G\setminus G$ so that $g$ in the coset 
$G\tau$ implies $g$ fixes precisely one integer (see Ex.\ref{afE}).\end{triv}  
\noindent \cite{GMS} calls
our  Schur covering property, arithmetic exceptionality.)

\cite[Thm.~1.4, c)]{GMS} has the list where the genus of the Galois closure exceeds 1.
These {\sl are\/} the Schur-sporadics: Only finitely many 0-sporadic groups occur. Yet, John's
original problem posed sporadic to mean the composition factors included other than
cyclic or alternating groups. All three types of degree 25 alluded to above were not sporadic from
this criterion. Indeed, the only nonsporadic from this criterion in the whole list were the groups 
$\PSL_2(n)$ with
$n=8$ and 9, and these had polynomial forerunners from \cite{primPol}. The more optimistic
conjecture of \S\ref{nonSporadics} emerged because of the care John insisted upon. 
    
Also, how about the nonsporadic appearances? 
Would you think {\sl big theorem\/} when you hear of a study of dihedral groups? 
Yet,
\cite{SeAbell-adic} is a big theorem.  It is the arithmetic of special four branch
point dihedral covers. The kind we call {\sl involution\/}: They are in the Nielsen class
$\ni(D_{p^{k+1}},\bfC_{2^4})$, four repetitions of the involution conjugacy class in $D_{p^{k+1}}$.
\cite[\S5.2-5.3]{FrExcTow} connects this to exceptional covers (think Schur covering property).   
\cite[App.~C]{FrExcTow} shows alternating and dihedral groups are dual 
for arithmetic questions about monodromy group covers.  

\begin{exmp}[Rational functions and the open image theorem] \label{afE} 
Notice that $D_p\le
\bZ/p\xs (\bZ/p)^*\le S_p$ satisfies the criterion of \eqref{taucoset}. The goal is to describe
rational functions
$f\in K(x)$ with
$K$ some number field so the Galois closure group 
$\hat G_f$ (resp.~$G_f$) of $f(x)-z$ over $K(z)$ (resp.~$\bC(z)$) gives such a pair. The only
simplification is that $f$ can't decompose into lower degree polynomials over $K$. When $E$ is an
elliptic curve without complex multiplication in \S\ref{nonempNielsen2} produces  $f$ 
indecomposable over $K$, but decomposable over $\bC$. This is one of the two nonsporadic cases
where the Galois closure cover for $f:\prP^1_w\to \prP^1_z$ has genus 1.
\cite[Thm.~1.4, b)]{GMS} has the complete list where the Galois closure cover has genus 1. A 
reader new to this will see some that look sporadic. Yet, those come from elliptic curves
and topics like complex multiplication. They are cases where $K=\bQ$ and there are special isogenies over $\bQ$ defined by a $p$-division point not over
$\bQ$. \end{exmp} 

\subsubsection{Monstrous Moonshine uniformizers} \label{MMoon} Recall a rough statement from
{\sl Monstrous Moonshine\/}. Most genus 0 quotients from modular subgroups of $\PSL_2(\bZ)$
have  uniformizers from $\theta$-functions that are  automorphic functions on the upper half plane.
This inspired conjecture, to 
which John significantly contributed \cite{Thompson1}, gives away John's intense desire  to see the
genus 0 monodromy covers group theoretically. A recent Fields  Medal to Borcherds on this topic
corroborates the world's interest in  genus 0 function fields, if the uniformizer has {\sl
significance}.  

The Santa Cruz conference of 1979  alluded to on
\cite[p.~341]{Solomon} (\cite{feit} and \cite{RETExp} came from there) had intense
discussions of Monstrous Moonshine. This was soon after a suggestion by A.~Ogg: He noticed that
primes $p$ dividing the order of the Monster (simple group; denote it by $M$)  are those where the
function field of  the  normalizer of $\Gamma_0(p)$ in $\PSL_2(\bR)$ has genus zero. A.~Pizer was
present: He contributed that those    primes  satisfy a certain conjecture of Hecke relating
modular forms  of weight 2 to 
quaternion algebra $\theta$-series \cite{Pizer}. Apparently Klein, Fricke and Hecke had recognized
the  problem of finding the function field generator of genus 0 quotients of 
the upper half-plane, not necessarily given by congruence subgroups of  
$\PSL_2(\bZ)$.  It seems somewhere in the literature 
is the phrase {\sl genus 0 problem\/} attached to a specific Hecke 
formulation.

Thompson concocted a relation between $q=e^{2\pi i \tau}$-expansion coefficients 
of $j(\tau) = 
q^{-1} + \sum_{n=0}^\infty u_n q^n$ ($u_0=744$, $u_1=196884$, $u_2=21493760$, 
$u_3=864299970$, $u_4=20245856256$, $u_5 = 333202640600$) and  irreducible 
characters of $M$. At the time, the Monster hadn't been proved to exist, and even if it did, some
of its character degrees weren't shown for certain. (\cite{Thompson1} showed if the Monster
existed, these properties uniquely defined it.) John noted the coefficients
listed for
$j$ are  sums of positive 
integral  multiples of these. With this  data he conjectured a 
$q$-expansion with coefficients in Monster characters with these properties \cite{Thompson2}: 
\begin{itemize} \item the $q$ expansion of $j$ is its
evaluation at 1; and 
\item the other genus 0 modular related covers have uniformizers from its evaluation
at the other conjugacy classes of $M$. \end{itemize} 

\cite[p.~28]{Ray} discusses those automorphic forms on the upper half plane with product
expansions following Borcherd's characterization. Kac-Moody
algebras give automorphic forms with a product expansion. The construction of a Monster Lie
Algebra that gave $q$-expansions matching that predicted by Thompson is what \wsp to a novice
like myself \wsp looks like the main story of
\cite{Ray}. 

Is there any function, for example on any of the genus 0 spaces from this paper that
vaguely has a chance to be like such functions? Between \cite[App.~B.2]{BFr} and \cite{SeTheta}
one may conclude the following discussion. 

All components of the 
$\sH_{p,k}^\rd\,$s  in \S\ref{projNCs} have $\theta$-nulls canonically attached to their
moduli definition. So do many of the quotients between  $\sH_{p,k+1}^\rd$ and $\sH_{p,k}^\rd$. 
For the
$\sH_{p,k}^\rd\,$s we really do mean
$\theta$-nulls defined by analytically continuing a $\theta$ function on the Galois cover
$\phi_\bp:X_\bp \to
\prP^1_z$ attached  to $\bp\in \sH_{p,k}^\rd$, then evaluating it at the origin. Usually such
$\theta$-nulls have a character attached to them. Here that would be related to the genus of
$X_\bp$. So, we cannot automatically assert these $\theta$-nulls are automorphic on the upper
half plane (compare with genus 1 versions in \cite{FKraTheta}).
\cite{Si63} (though hard to find) presents the story of $\theta$-functions defined by
unimodular quadratic forms. These do define automorphic functions on the upper half plane. 

The discussion for
$\sH_{2,1}$ alluded to two genus 1 and two genus 0 components. The $\theta$  for the genus 0
components is an odd function. So its $\theta$-null will be identically 0. For the genus 1,
components, however, it is even, and both those components cover (by a degree 2 map) a genus 0
curve between
$\sH_{2,1}$ and
$\sH_{2,0}$. That is one space we suggest for significance. 

\subsubsection{Strategies for success} 
\cite{brown} has a portrait of Darwin as a man of
considerable self-confidence, one who used many strategies to further his  evolution theory. Though
this is contrary to other biographies of Darwin, the case is  convincing. Darwin was a voluminous
correspondent, and his 14,000$^+$ letters are recorded  in many places. Those letters reflected his
high place in the scientific  community. They often farmed out to his
correspondent the task of completing a  biological search, or even a productive experiment. So,
great was Darwin's reputation that  his correspondents allowed him to travel little (in later
life), and yet accumulate great  evidence for his mature volumes. Younger colleagues (the famous
Thomas Huxley, for  example) and even his own family (his son Francis, for instance) presented him
a protective  team and work companions. The success of the theory of evolution owes much to the
great  endeavor we call Charles Darwin. 

\cite{Rowe} reflects on the different way mathematical programs achieve success in his 
review of essays that touch on the growth of US mathematics into the international 
framework. He suggests the international framework that \cite{Parshall} touts may not
be the most  compelling approach to analyzing mathematical success. 

\begin{quote} Recently, historians of science have tried to understand how \dots locally gained
knowledge produced by research schools becomes {\sl universal}, a process that involves analyzing
all the various mechanisms that produce consensus and support within broader scientific
networks and communities. Similar studies of mathematical schools,
however, have been lacking, a circumstance \dots partly due to the
prevalent belief that mathematical knowledge is from its very inception
universal and \dots stands in no urgent need to win converts. \end{quote}

There are two genus 0 problems: \Prob1 and \Prob2. They seem very different. Yet, they are two
of the resonant contributions of John Thompson, outside his first area of renown.
His influence on their
solutions and applications is so large, you see I've struggled to complete their context. The
historian remarks intrigue me for it would be valuable to learn, along their lines, more about our
community.  
\providecommand{\bysame}{\leavevmode\hbox to3em{\hrulefill}\thinspace}

\end{document}